\documentclass[11pt, reqno]{amsart}
\usepackage[a4paper, left=1in, right=1in, top=1in, bottom=1in]{geometry}
\usepackage{amsmath,hyperref}
\usepackage{amssymb}
\usepackage[T1]{fontenc}
\usepackage{amsthm}

\usepackage{hyperref}
\hypersetup{colorlinks,
	linkcolor={black},
	filecolor={black},
	citecolor={blue},
	urlcolor={blue}}  

\usepackage{amsthm}
\newtheorem{theorem}{Theorem}[section]

\newtheorem{conjecture}{Conjecture}

\newtheorem{corollary}{Corollary}[section]
\newtheorem{lma}{Lemma}[section]
\numberwithin{equation}{section}

\begin{document}
	\title[Arithmetic properties of the 2-color overpartition function $\overline{p}_\ell(n)$]{Arithmetic properties of the 2-color overpartition function $\overline{p}_\ell(n)$}
	\author{H. S. Sumanth Bharadwaj, N. Sujatha and S. Chandankumar}
	
	\address[H. S. Sumanth Bharadwaj]{Department of Mathematics and Statistics, Faculty of Natural Sciences, M. S. Ramaiah University of Applied Sciences, Peenya Campus, Peenya 4th Phase, Bengaluru-560 058, Karnataka, India.} 
	\email{sumanthbharadwaj@gmail.com}
	
	\address[N. Sujatha]{Department of Mathematics, B. M. S. College of Engineering, Bengaluru-560 004, Karnataka, India.} 
	\email{sujathan.maths@bmsce.ac.in}
	
	\address[S. Chandankumar]{Department of Mathematics and Statistics, Faculty of Natural Sciences, M. S. Ramaiah University of Applied Sciences, Peenya Campus, Peenya 4th Phase, Bengaluru-560 058, Karnataka, India.} 
	\email{chandan.s17@gmail.com}
	
	\subjclass[2010]{05A17, 11P83, 05A15} \keywords{Congruences, 2-color overpartitions, $q$-series}
	
	\maketitle
	\begin{abstract}
		We derive general families of Ramanujan-type congruences for the function $\overline{p}_{\ell}(n)$, which counts the $2$-color overpartitions of $n$ in which one of the colors appears only in parts that are multiples of $\ell$. For example, for all $n\geq0$ we prove that $$\overline{p}_{4}(32n+28)\equiv 0 \pmod{512}.$$
	\end{abstract}
	
	\section{Introduction}
	
	An \emph{overpartition} of a positive integer $n$ is a partition of $n$
	in which the first occurrence of a part may be overlined.
	Let $\overline{p}(n)$ denote the number of overpartitions of $n$.
	For example, $\overline{p}(3)=8.$ Corteel and Lovejoy \cite{CorteelLovejoy} showed that the generating
	function for $\overline{p}(n)$ is
	\[
	\sum_{n=0}^{\infty}\overline{p}(n)q^n
	=
	\frac{f_2}{f_1^2},
	\]
	where throughout this paper
	\[
	(a;q)_\infty
	=
	\prod_{m=0}^{\infty}(1-aq^m),
	\quad |q|<1,\qquad \mathrm{ and } \qquad f_k=(q^k;q^k)_\infty, \quad k\in \mathbb{N}.
	\]
	
	\noindent Let $p_\ell(n)$ denote the number of $2$-color partitions of $n$ in which one of the colors appears only in parts that are multiples of $\ell$. Its generating
	function is given by
	\begin{equation}
		\sum_{n=0}^{\infty} p_\ell(n)q^n
		=
		\frac{1}{(q;q)_\infty (q^\ell;q^\ell)_\infty}=\frac{1}{f_1f_\ell}.
	\end{equation}
	For example, $p_3(3)=4$; the four such $2$-color partitions of $3$ are
	\[
	3_a,\quad 3_b,\quad 2_a+1_a,\quad 1_a+1_a+1_a,
	\]
	where the subscript $a$ marks the unrestricted color and $b$ the color permitted only on multiples of $\ell$ (here $\ell=3$).
	Let $\overline{p}_\ell(n)$ denote the number of $2$-color overpartitions of $n$ in which one of the colors appears only in parts that are multiples of $\ell$. Its generating function is
	\begin{equation}
		\sum_{n=0}^{\infty} \overline{p}_\ell(n)q^n
		=
		\frac{(-q;q)_\infty(-q^\ell;q^\ell)_\infty}
		{(q;q)_\infty(q^\ell;q^\ell)_\infty}=
		\frac{f_2f_{2\ell}}{f_1^2f_\ell^2}.
		\label{pt}
	\end{equation}
	For example, $\overline{p}_3(3)=10$; the ten such $2$-color overpartitions of $3$ are
	\[
	3_a,\;
	\overline{3}_a,\;
	3_b,\;
	\overline{3}_b,\;
	2_a+1_a,\;
	\overline{2}_a+1_a,\;
	2_a+\overline{1}_a,\;
	\overline{2}_a+\overline{1}_a,\;
	1_a+1_a+1_a,\;
	\overline{1}_a+1_a+1_a.
	\]
	Arithmetic congruences modulo small powers of $2$ and $3$ for $\overline{p}_3(n)$ were investigated by M. S. Mahadeva Naika, S. Shivaprasada Nayaka, and C. Shivashankar in \cite{MSMSSNCS}. The general function $\overline{p}_\ell(n)$, and in particular the way its congruences depend on $\ell$, has not otherwise been treated.
	
	The primary objective of this paper is to give such a treatment, systematically investigating the arithmetic properties of $\overline{p}_{\ell}(n)$ for $\ell=2k,3k,4,6,8,$ and $9$. Our results are of four kinds. First, we prove \emph{$\ell$-uniform families}: a single dissection of the generating function \eqref{pt} yields, for \emph{every} $\ell\ge1$, the congruences of Theorem~\ref{t1} and Corollaries~\ref{c1}--\ref{c2}, so that one identity settles infinitely many functions at once. Second, we obtain \emph{internal congruences} relating a function to a dilate of itself, such as $\overline{p}_{3\ell}(3n+1)\equiv\overline{p}_{3\ell}\!\left(4^\alpha(3n+1)\right)\pmod 4$. Third, for the individual values $\ell=4,6,8,9$ we establish prime-power congruences; the strongest of these is the modulus-$512$ family
	\[
	\overline{p}_{4}(32n+28)\equiv0\pmod{512}\qquad(n\ge0),
	\]
	together with a family of related congruences modulo $128$ and $256$. Fourth, combining the support of $f_1$ and $f_1^3$ on pentagonal and triangular numbers with a quadratic-residue argument modulo an auxiliary prime $p$, we obtain two new vanishing congruences,
	\[
	\overline{p}_{3\ell}(24pn+24b+1)\equiv0\pmod4,\qquad
	\overline{p}_{4}(16pn+16b+2)\equiv0\pmod8,
	\]
	valid for \emph{every} prime $p\ge5$ and every $b$ for which $24b+1$, respectively $8b+1$, is a quadratic non-residue modulo $p$; since infinitely many such $(p,b)$ exist, this yields infinitely many further congruences at a single stroke.
	
	The methods employed throughout the paper are elementary and rely chiefly on classical $q$-series identities, dissections, and generating function manipulations; no appeal is made to the theory of modular forms. Section 2 collects the dissection lemmas used throughout. Section 3 proves the $\ell$-uniform families and their corollaries, together with the internal congruences of Theorem~\ref{t1}. Section 4 treats the individual cases $\ell=4,6,8,9$ in turn, including the two quadratic-residue vanishing congruences described above. Section 5 records further observations and conjectures.
	
	\section{Preliminaries}
	\noindent We begin this section by introducing Ramanujan's general theta-function $f(a,b)$, defined as:
	\begin{align}
		f(a,b): = \sum_{n=-\infty}^{\infty} a^{\frac{n(n+1)}{2}} b^{\frac{n(n-1)}{2}}, \quad |ab| < 1.
	\end{align}
	The following definitions of theta-functions $\varphi$, $\psi$ and $f$ are classical:
	\begin{eqnarray}
		\varphi(q)&:=&f(q,q)= \frac{f_2^5}{f_1^2f_4^2},\\
		\psi(q)&:=&f(q,q^3) =\frac{f_2^2}{f_1},\\
		f(-q)&:=&f(-q,-q^2)=f_1.
	\end{eqnarray}
	\begin{lma}
		Suppose $l \geq 1$, $m \geq 1$ are any integer, and $p$ is any prime. Then we have
		\begin{align}\label{bt}
			f^{p^{l-1}}_{pm} \equiv f_{m}^{p^{l}} \pmod {p^l}.
		\end{align}
	\end{lma}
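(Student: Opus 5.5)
The plan is to reduce the statement to a single product congruence of power series and then prove that congruence by induction on $l$. Since
\[
f_{pm}^{p^{l-1}}=\prod_{n\ge1}(1-q^{pmn})^{p^{l-1}},\qquad f_m^{p^l}=\prod_{n\ge1}(1-q^{mn})^{p^{l}},
\]
it suffices to show, coefficientwise in $\mathbb{Z}[[q]]$,
\[
(1-x^p)^{p^{l-1}}\equiv (1-x)^{p^l}\pmod{p^l}
\]
for every monomial $x=q^{mn}$. Products of factors that are congruent modulo $p^l$ (all series having integer coefficients and constant term $1$) remain congruent, and the infinite product converges $q$-adically, so the statement follows from this one congruence.

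The base case $l=1$ is the Frobenius identity $(1-x)^p\equiv 1-x^p \pmod p$. It holds because $p\mid\binom{p}{k}$ for $0<k<p$, and because $(-1)^p\equiv -1 \pmod p$ also when $p=2$.

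For the inductive step, I would use the standard lifting lemma: if $A,B\in\mathbb{Z}[[q]]$ and $A\equiv B\pmod{p^j}$ with $j\ge1$, then $A^p\equiv B^p\pmod{p^{j+1}}$. To prove it, write $A=B+p^jC$ and expand:
\[
A^p=B^p+\binom p1 B^{p-1}p^jC+\sum_{k\ge2}\binom pk B^{p-k}p^{jk}C^k .
\]
The $k=1$ term is divisible by $p^{j+1}$. For $k\ge2$ the exponent satisfies $jk\ge j+1$. Applying the lemma with $A=(1-x)^{p^{l-1}}$ and $B=(1-x^p)^{p^{l-2}}$, which are congruent modulo $p^{l-1}$ by the induction hypothesis, gives the claim for $l$.

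No step is a real obstacle. The only points needing care are the sign when $p=2$ in the base case, and noting that the lifting lemma requires $j\ge1$ so that $jk\ge j+1$ holds for $k\ge2$.
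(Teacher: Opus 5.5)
Your proof is correct and complete. The paper states this lemma without proof or citation, treating it as a standard fact, so there is no competing argument to compare against. Your route is exactly the standard one: reduce to the single-factor congruence $(1-x^p)^{p^{l-1}}\equiv(1-x)^{p^l}\pmod{p^l}$ (each coefficient of the infinite products depends on only finitely many factors), take the Frobenius congruence as the base case with the sign for $p=2$ checked, and lift it with $A\equiv B\pmod{p^j}\Rightarrow A^p\equiv B^p\pmod{p^{j+1}}$ for $j\ge1$. Spelling it out is worthwhile, since the paper applies the lemma mostly with $p=2$, where that sign check and the lifting step are precisely what is needed.
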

	\begin{lma}\cite{NBK}\label{l2}
		The following $2$-dissection hold
		\begin{align}
			\dfrac{1}{f_1f_3}&=\dfrac{f_8^2f_{12}^5}{f_2^2f_4f_6^4f_{24}^2}+q\dfrac{f_4^5f_{24}^2}{f_2^4f_6^2f_8^2f_{12}}.\label{1byf1f3}
		\end{align}
	\end{lma}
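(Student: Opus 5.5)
The plan is to deduce the dissection of $1/(f_1f_3)$ from the companion $2$-dissection of $f_1f_3$, using the standard trick of multiplying numerator and denominator by $f_1f_3$ evaluated at $-q$. Write $f_k(-q)$ for $f_k$ with $q$ replaced by $-q$. For odd $k$ one has $f_k(-q)=\dfrac{f_{2k}^3}{f_kf_{4k}}$. Hence
\[
f_1\,f_1(-q)=\frac{f_2^3}{f_4},\qquad f_3\,f_3(-q)=\frac{f_6^3}{f_{12}},
\]
and therefore
\[
\frac{1}{f_1f_3}=\frac{f_4f_{12}}{f_2^3f_6^3}\; f_1(-q)\,f_3(-q).
\]
The prefactor is already a function of $q^2$. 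So it suffices to $2$-dissect $f_1(-q)f_3(-q)$, which is obtained from a $2$-dissection of $f_1f_3$ by sending $q\to -q$. That substitution flips the sign of the odd part.

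The key input I would use is the classical identity
\[
f_1f_3=\frac{f_2f_8^2f_{12}^4}{f_4^2f_6f_{24}^2}-q\,\frac{f_4^4f_6f_{24}^2}{f_2f_8^2f_{12}^2}.
\]
This identity is due to Hirschhorn and is standard in this literature. If I had to prove it rather than cite it, I would write $f_1=f(-q,-q^2)$ and $f_3=f(-q^3,-q^6)$ via the Jacobi triple product. I would then apply Ramanujan's product formula for $f(a,b)f(c,d)$ (Berndt, Entry 29/30, or Schröter's formula). This splits the product of two theta series into even and odd parts in $q$, each a sum of products of theta functions in $q^2$. I would then convert each resulting theta function, such as $f(q^2,q^{10})$ or $f(q^4,q^8)$, back into $f_k$-quotients by the triple product.

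Replacing $q$ by $-q$ turns the minus sign into a plus. Multiplying through by $\dfrac{f_4f_{12}}{f_2^3f_6^3}$ then gives the two terms directly. The even part is
\[
\frac{f_2f_8^2f_{12}^4}{f_4^2f_6f_{24}^2}\cdot\frac{f_4f_{12}}{f_2^3f_6^3}=\frac{f_8^2f_{12}^5}{f_2^2f_4f_6^4f_{24}^2}.
\]
The odd part is
\[
q\,\frac{f_4^4f_6f_{24}^2}{f_2f_8^2f_{12}^2}\cdot\frac{f_4f_{12}}{f_2^3f_6^3}=q\,\frac{f_4^5f_{24}^2}{f_2^4f_6^2f_8^2f_{12}}.
\]
These are exactly the two terms in \eqref{1byf1f3}.

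The main obstacle is the $2$-dissection of $f_1f_3$ itself, if one insists on proving it rather than citing it. In particular, matching the theta products coming out of Schröter's formula with the stated eta-quotients needs careful bookkeeping of the exponents. As a sanity check I would compare the first several coefficients of both sides. The rest of the argument is the mechanical eta-quotient simplification above.
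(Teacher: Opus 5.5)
Your proof is correct. It is also more than the paper supplies: the paper gives no argument for \eqref{1byf1f3} and simply quotes it from the literature.

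You derive the lemma from the classical $2$-dissection of $f_1f_3$ by multiplying numerator and denominator by $f_1(-q)f_3(-q)$. Every step holds:
\begin{itemize}
\item $f_k(-q)=f_{2k}^3/(f_kf_{4k})$ for odd $k$;
\item every $f_{2k}$ is invariant under $q\to-q$, so only the odd part changes sign;
\item the exponent bookkeeping in both terms is right.
\end{itemize}
The resulting right-hand side also agrees with $1/(f_1f_3)=1+q+2q^2+4q^3+6q^4+9q^5+16q^6+\cdots$.

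Your route buys transparency and generality. For every eta quotient $F$, the product $F(q)F(-q)$ is again an eta quotient in $q^2$. So the same trick converts any $2$-dissection of $F$ into one of $1/F$, and the lemma becomes a two-line corollary of a single standard identity. The paper's citation buys only brevity.

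One caution concerns your optional sketch for proving the $f_1f_3$ dissection from scratch. Entry~29 requires $ab=cd$, and this fails for $f(-q,-q^2)\,f(-q^3,-q^6)$, where $ab=q^3$ but $cd=q^9$. You would instead need a general Schr\"oter-type product formula, based on $4(m^2+3n^2)=(m+3n)^2+3(m-n)^2$, or the cubic theta function argument. Citing the identity, as you mainly propose, is entirely adequate.
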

	\begin{lma}[{\cite{xia2013analogues}}]\label{l2b}
		The following $2$-dissection holds modulo $3$:
		\begin{align}
			\frac{f_1^2\,f_6^4}{f_2^2\,f_3^2}
			\equiv
			\frac{f_4^2\,f_{12}^4}{f_2\,f_6\,f_8\,f_{24}}
			+q\,\frac{f_8\,f_{12}\,f_{24}}{f_4}
			\pmod{3}.\label{f12byf33}
		\end{align}
	\end{lma}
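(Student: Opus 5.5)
The plan is to reduce the left-hand side of \eqref{f12byf33}, modulo $3$, to a multiple of $1/(f_1f_3)$, and then apply the exact $2$-dissection \eqref{1byf1f3} of Lemma~\ref{l2}. The tool throughout is Lemma \eqref{bt} with $p=3$, $l=1$, which gives $f_{3m}\equiv f_m^3 \pmod 3$ for every $m\ge1$. All quotients of the $f_k$ are power series with integer coefficients and constant term $1$, so they are invertible in $\mathbb{Z}[[q]]$. Congruences modulo $3$ between such series may therefore be multiplied and divided freely.

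\emph{Step 1: the reduction.} Since $f_1^3\equiv f_3$, we have $f_1^2=f_1^3/f_1\equiv f_3/f_1 \pmod 3$, and hence $f_1^2/f_3^2\equiv 1/(f_1f_3)$. Therefore
\[
\frac{f_1^2f_6^4}{f_2^2f_3^2}\equiv \frac{f_6^4}{f_2^2}\cdot\frac{1}{f_1f_3}\pmod 3 .
\]
Substituting \eqref{1byf1f3} splits this into an even part and an odd part. The even part is
\[
\frac{f_8^2f_{12}^5}{f_2^4f_4f_{24}^2}.
\]
The odd part is
\[
q\,\frac{f_4^5f_6^2f_{24}^2}{f_2^6f_8^2f_{12}}.
\]

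\emph{Step 2: matching the even part.} I will compare each piece with the claimed term by taking ratios. For the even part, the claimed term divided by ours equals
\[
\frac{f_2^3f_4^3f_{24}}{f_6f_8^3f_{12}}.
\]
Using $f_2^3\equiv f_6$, $f_4^3\equiv f_{12}$ and $f_8^3\equiv f_{24}$ modulo $3$, this ratio is $\equiv 1$.

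\emph{Step 3: matching the odd part.} For the odd part, our term divided by the claimed term $q\,f_8f_{12}f_{24}/f_4$ equals
\[
\frac{f_4^6f_6^2f_{24}}{f_2^6f_8^3f_{12}^2}.
\]
By the same three congruences this is $\equiv \dfrac{f_{12}^2f_6^2f_{24}}{f_6^2f_{24}f_{12}^2}=1$. This yields \eqref{f12byf33}.

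The only delicate point is the first reduction, namely recognising that $f_1^2/f_3^2$ collapses to $1/(f_1f_3)$ modulo $3$, so that Lemma~\ref{l2} applies directly. After that, the argument is bookkeeping of exponents together with the justification, noted above, that division by these series is legitimate modulo $3$.
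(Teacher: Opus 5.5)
Your proof is correct. The reduction $f_1^2/f_3^2\equiv 1/(f_1f_3)\pmod 3$ is right, both products with \eqref{1byf1f3} are computed correctly, and both ratio computations check out. Your justification for dividing congruences is also adequate: every quotient involved is a unit in $\mathbb{Z}[[q]]$, and $A-B=B(A/B-1)$.

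The paper gives no argument for this lemma; it simply imports it from \cite{xia2013analogues}. Your route instead derives it from material already in Section~2, namely the exact $2$-dissection of Lemma~\ref{l2} together with \eqref{bt}. This makes the paper self-contained on this point. It also shows that \eqref{f12byf33} is just the mod-$3$ reduction of an exact identity the authors already record, so the separate citation is not needed.

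There is an even shorter variant along the same lines. Since $f_3^2\equiv f_1^6\pmod 3$, the left side is $\equiv f_6^4/(f_2^2f_1^4)$. Applying \eqref{1byf14} and then using $f_6^4\equiv f_2^{12}$ and $4\equiv 1$ gives the two pieces $f_4^{14}/(f_2^4f_8^4)$ and $q\,f_4^2f_8^4$. These are exactly the two right-hand terms of \eqref{f12byf33} after reducing them modulo $3$ with \eqref{bt}.

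The citation is shorter on the page. Your derivation can be checked without consulting the reference.
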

	\begin{lma} [{\cite[Entry~25]{berndt2012ramanujan}}] The following 2-dissections hold: 
		\begin{equation}\label{f12}
			f_1^2 =\frac{f_2 f_8^5}{f_4^2 f_{16}^2}
			-2q\frac{f_2 f_{16}^2}{f_8},
		\end{equation}
		\begin{equation}\label{f14}
			f_1^4 =\frac{f_4^{10}}{f_2^2 f_{8}^4}
			-4q\frac{f_2^2 f_{8}^4}{f_4^2},
		\end{equation}
		\begin{equation}\label{1byf12}
			\frac{1}{f_1^2} =\frac{f_8^5}{f_2^5 f_{16}^2}
			+2q\frac{f_4^2 f_{16}^2}{f_2^5 f_8},
		\end{equation}
		\begin{equation}\label{1byf14}
			\frac{1}{f_1^4} =\frac{f_4^{14}}{f_2^{14} f_{8}^4}
			+4q\frac{f_4^2 f_{8}^4}{f_2^{10}}.
		\end{equation}
	\end{lma}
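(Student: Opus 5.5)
The four identities are Ramanujan's 2-dissections (Entry 25 in Berndt), and I will derive them all from the first one by elementary manipulations. The starting point is
\[
f_1^2=\frac{f_2f_8^5}{f_4^2f_{16}^2}-2q\frac{f_2f_{16}^2}{f_8}.
\]
I would prove it from the 2-dissection of $\varphi$. By definition $\varphi(q)=f(q,q)=\sum_n q^{n^2}$. Splitting $n$ into even and odd gives
\[
\varphi(q)=\varphi(q^4)+2q\psi(q^8).
\]
Replacing $q$ by $-q$ gives $\varphi(-q)=\varphi(q^4)-2q\psi(q^8)$. From the product formula for $\varphi$ we have $\varphi(-q)=f_1^2/f_2$, because $f_1\mapsto f_2^3/(f_1f_4)$ under $q\to-q$ while $f_2,f_4$ are unchanged. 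The quoted product forms give $\varphi(q^4)=f_8^5/(f_4^2f_{16}^2)$ and $\psi(q^8)=f_{16}^2/f_8$. Multiplying through by $f_2$ then yields the dissection of $f_1^2$.

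For $1/f_1^2$, I apply the substitution $q\to-q$ to the identity for $\varphi(q)$ itself:
\[
\varphi(q)=\frac{f_2^5}{f_1^2f_4^2}=\varphi(q^4)+2q\psi(q^8).
\]
Multiplying by $f_4^2/f_2^5$ gives
\[
\frac{1}{f_1^2}=\frac{f_8^5}{f_2^5f_{16}^2}+2q\frac{f_4^2f_{16}^2}{f_2^5f_8}.
\]
This is exactly the $1/f_1^2$ identity, and it needs no sign change at all.

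For the fourth powers I square. Squaring the $\varphi(-q)$ identity gives
\[
\varphi(-q)^2=\varphi(q^4)^2+4q^2\psi(q^8)^2-4q\varphi(q^4)\psi(q^8).
\]
Two further classical identities let me recombine the even part:
\[
\varphi(q)^2+\varphi(-q)^2=2\varphi(q^2)^2,\qquad \varphi(q^4)^2+4q^2\psi(q^8)^2=\varphi(q^2)^2.
\]
The second follows from the first by comparing even parts; alternatively it is the even part of $\varphi(q)^2$. With these, $\varphi(-q)^2=\varphi(q^2)^2-4q\varphi(q^4)\psi(q^8)$.

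Now I convert to products. On the left, $\varphi(-q)^2=f_1^4/f_2^2$. On the right I use
\[
\varphi(q^2)=\frac{f_4^5}{f_2^2f_8^2},\qquad \varphi(q^4)\psi(q^8)=\frac{f_8^5}{f_4^2f_{16}^2}\cdot\frac{f_{16}^2}{f_8}=\frac{f_8^4}{f_4^2}.
\]
Multiplying by $f_2^2$ gives
\[
f_1^4=\frac{f_4^{10}}{f_2^2f_8^4}-4q\frac{f_2^2f_8^4}{f_4^2}.
\]
Similarly, $\varphi(q)^2=\varphi(q^2)^2+4q\varphi(q^4)\psi(q^8)$ with $\varphi(q)^2=f_2^{10}/(f_1^4f_4^4)$. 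Multiplying by $f_4^4/f_2^{10}$ gives
\[
\frac{1}{f_1^4}=\frac{f_4^{14}}{f_2^{14}f_8^4}+4q\frac{f_4^2f_8^4}{f_2^{10}},
\]
which matches the stated formula.

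The main obstacle is justifying $\varphi(q^4)^2+4q^2\psi(q^8)^2=\varphi(q^2)^2$, since everything else is bookkeeping. I would get it by replacing $q\to q^2$ in $\varphi(q)^2+\varphi(-q)^2=2\varphi(q^2)^2$, which is the even part of the dissection $\varphi(q)^2=\varphi(q^2)^2+4q\psi(q^4)^2$. That dissection is Berndt's Entry 25(iv), and I would cite it, or else verify it by counting representations as sums of two squares split by parity. Alternatively, squaring $\varphi(q)=\varphi(q^4)+2q\psi(q^8)$ and extracting even powers gives the identity directly, since the left side's even part equals $\tfrac12(\varphi(q)^2+\varphi(-q)^2)=\varphi(q^2)^2$.
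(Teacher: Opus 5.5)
Your derivation is correct: with $\varphi(-q)=f_1^2/f_2$, $\varphi(q^2)=f_4^5/(f_2^2f_8^2)$, $\varphi(q^4)=f_8^5/(f_4^2f_{16}^2)$ and $\psi(q^8)=f_{16}^2/f_8$, all four product forms come out exactly as stated. The paper gives no proof of this lemma; it only cites Berndt's Entry~25. Your route turns the theta-function identities $\varphi(\pm q)=\varphi(q^4)\pm2q\psi(q^8)$ and $\varphi^2(\pm q)=\varphi^2(q^2)\pm4q\psi^2(q^4)$ into product form. Your cross term $\varphi(q^4)\psi(q^8)=f_8^4/f_4^2$ is just $\psi^2(q^4)$. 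Compared with the citation, your route is self-contained. Everything reduces to three inputs: the parity split of $\sum_n q^{n^2}$, the product formulas for $\varphi$ and $\psi$, and one arithmetic fact about sums of two squares.

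Two small slips in the write-up. First, for $1/f_1^2$ you do not apply $q\to-q$ at all, as you yourself say a line later. Second, in your last paragraph, substituting $q\to q^2$ into $\varphi(q)^2+\varphi(-q)^2=2\varphi(q^2)^2$ gives $\varphi(q^2)^2+\varphi(-q^2)^2=2\varphi(q^4)^2$, which is not what you need. The substitution has to be made in $\varphi(q)^2=\varphi(q^2)^2+4q\psi(q^4)^2$.

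Your ``alternatively'' argument avoids this problem. Its only input is that the even part of $\varphi(q)^2$ equals $\varphi(q^2)^2$, i.e.\ $r_2(2n)=r_2(n)$, where $r_2(m)$ counts ordered integer pairs with sum of squares $m$. This follows from a bijection. If $a^2+b^2=2n$, then necessarily $a\equiv b\pmod 2$, and the map $(a,b)\mapsto\bigl(\tfrac{a+b}{2},\tfrac{a-b}{2}\bigr)$ sends this representation to one of $n$. Its inverse is $(c,d)\mapsto(c+d,c-d)$. With that, your proof is complete and needs no citation beyond the product formulas for $\varphi$ and $\psi$.
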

	\begin{lma}[{\cite{hirschhorn1993cubic}}]The following 2-dissection hold:
		\begin{align}
			\frac{f_3^3}{f_1} &= \frac{f_4^3f_6^2}{f_2^2f_{12}}+q\frac{f_{12}^3}{f_4}.\label{f33byf1}
		\end{align}
	\end{lma}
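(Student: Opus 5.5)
The plan is to pass from the product $f_3^3/f_1$ to a binary quadratic form lattice sum, split that sum by parity, and recognise each half as a product. I will use the classical cubic theta identity (Borweins' $c(q)$)
\[
\frac{f_3^3}{f_1}=\sum_{m,n=-\infty}^{\infty} q^{m^2+mn+n^2+m+n},
\]
which is equivalent to $c(q)=3q^{1/3}f_3^3/f_1$. Writing $Q(m,n)=m^2+mn+n^2+m+n$, a check modulo $2$ shows that $Q(m,n)$ is odd exactly when $m$ and $n$ are both odd. So the $2$-dissection of $f_3^3/f_1$ comes from splitting the lattice $\mathbb{Z}^2$ into the class $\{m,n \text{ both odd}\}$, which gives the odd part, and its complement, which gives the even part.

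\textbf{Odd part.} Substitute $m=2r+1$, $n=2s+1$. A short computation gives $Q=4(r^2+rs+s^2+2r+2s)+5$. Completing the square, via a unimodular shift of $(r,s)$, rewrites $r^2+rs+s^2+2r+2s$ as $Q(r',s')$ plus a constant, up to a reindexing of $\mathbb{Z}^2$. The odd part is then $q\sum q^{4Q(r',s')}=q\,f_{12}^3/f_4$, using the displayed identity with $q\to q^4$. I expect this part to be routine; the only care needed is tracking the constant in the exponent so that exactly one power of $q$ survives.

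\textbf{Even part.} This is the sum over the three remaining classes: $(m,n)$ both even, $m$ odd with $n$ even, and $m$ even with $n$ odd. I will diagonalise the form on this sublattice using the change of variables $m=u+v$, $n=u-v$ (or $m-n=2v$), so that $m^2+mn+n^2=3u^2+v^2$-type expressions separate. The sum then factors as a finite combination of products $\varphi$ or $\psi$ evaluated at $q^2,q^4,q^6,q^{12}$. Next, I will convert each theta function to eta-quotients through $\varphi(q)=f_2^5/(f_1^2f_4^2)$ and $\psi(q)=f_2^2/f_1$. The target is the single product $f_4^3f_6^2/(f_2^2f_{12})$.

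I expect the main obstacle to be that the even part naturally appears as a sum of two products rather than one. Collapsing that sum to a single product needs an extra theta identity, for instance $\varphi(q)\varphi(q^3)+4q\psi(q^2)\psi(q^6)$-type relations, or a known $2$-dissection of $\varphi(q)$ and $\psi(q)$ such as $\varphi(q)=\varphi(q^4)+2q\psi(q^8)$. If that route becomes messy, I will take a shortcut: the odd part is already established, so I can obtain the even part as $f_3^3/f_1-q f_{12}^3/f_4$. It then suffices to verify one identity between eta-quotients. I would prove that identity by replacing $q\to -q$ and adding to the original to isolate the even part. For this I use $f_1\mapsto f_2^3/(f_1f_4)$ and $f_3\mapsto f_6^3/(f_3f_{12})$ under $q\to-q$, together with the elementary relation between $\psi(q)\psi(q^3)$ and its sign-flipped version, which reduces the claim to a known cubic identity from \cite{hirschhorn1993cubic}.
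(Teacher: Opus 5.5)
The paper gives no argument for this lemma; it only cites the source. Your proposal therefore has to stand on its own, and it does so only for the odd component.

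\textbf{Normalisation and the odd part.} First, a slip in your starting identity. Since $c(q)=q^{1/3}\sum_{m,n}q^{Q(m,n)}$, the relation $c(q)=3q^{1/3}f_3^3/f_1$ gives $\sum_{m,n}q^{Q(m,n)}=3f_3^3/f_1$, not $f_3^3/f_1$. You can see this from the constant term, which is $3$, coming from $(0,0),(-1,0),(0,-1)$. Because the dissection is linear this is harmless. Your odd part is then correct. The reindexing you need is not a pure translation, which would require shifts by $-\tfrac13$. It is $(r,s)\mapsto(-1-r,-1-s)$, which gives $Q(2r+1,2s+1)=4Q(-1-r,-1-s)+1$.

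\textbf{The gap: the even part.} This is where the actual content of the lemma lies, and it is not proved. Your diagonalisation produces sums of products. For example, on the class where $m,n$ are both even it yields $\varphi(q^4)f(q^8,q^{16})+2q^2\psi(q^8)f(q^4,q^{20})$. Collapsing this needs an identity such as $\varphi(q^2)f(q^4,q^8)+2q\psi(q^4)f(q^2,q^{10})=\psi(q)f(q,q^2)$, which you neither state nor prove. Neither $\varphi(q)\varphi(q^3)+4q\psi(q^2)\psi(q^6)$ nor $\varphi(q)=\varphi(q^4)+2q\psi(q^8)$ delivers it. Your shortcut is circular. With $F=f_3^3/f_1$ and your substitutions for $q\to-q$, the claim that $\tfrac12(F(q)+F(-q))$ equals the target becomes
\[
\frac{f_3^3}{f_1}+\frac{f_1f_4f_6^9}{f_2^3f_3^3f_{12}^3}=2\,\frac{f_4^3f_6^2}{f_2^2f_{12}}.
\]
Given the odd part, this is equivalent to the lemma itself. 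Reducing it to ``a known cubic identity'' from the cited paper is just the citation again.

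\textbf{The missing idea.} It fits inside your own lattice framework.
\begin{itemize}
\item The affine map $\sigma(m,n)=(n,-m-n-1)$ preserves $Q$. It maps the class $(1,1)\bmod 2$ to itself and permutes $(0,0)\to(0,1)\to(1,0)\to(0,0)$.
\item Hence the even part of $\sum q^{Q}$ equals $3\sum_{r,s}q^{Q(2r,2s)}$, where $Q(2r,2s)=2(2r^2+2rs+2s^2+r+s)$.
\item Put $i=2r+s$, $j=s$. This is a bijection of $\mathbb{Z}^2$ onto $\{(i,j): i\equiv j\pmod 2\}$, and $2r^2+2rs+2s^2+r+s=\tfrac{i(i+1)}{2}+\tfrac{j(3j+1)}{2}$.
\item The involution $i\mapsto -1-i$ preserves this exponent and swaps the two parity classes. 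So the restricted sum is half of $\sum_{i,j}q^{i(i+1)/2+j(3j+1)/2}=2\psi(q)f(q,q^2)$, that is, it equals $\psi(q)f(q,q^2)=f_2^3f_3^2/(f_1^2f_6)$.
\item Replacing $q$ by $q^2$ and dividing by $3$ gives the even component $f_4^3f_6^2/(f_2^2f_{12})$.
\end{itemize}
This completes the proof with no further theta-function identity needed.
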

	
	\begin{lma}[{\cite{mdb}}] The following 3-dissection hold:
		\begin{align}
			\frac{f_4}{f_1}=\frac{f_{12}f_{18}^{4}}{f_3^{3}f_{36}^{2}}+q\,\frac{f_6^{2}f_9^{3}f_{36}}{f_3^{4}f_{18}^{2}}+2q^{2}\,\frac{f_6f_{18}f_{36}}{f_3^{3}}.\label{f4byf1}
	\end{align}\end{lma}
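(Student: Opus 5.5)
We would prove the $3$-dissection \eqref{f4byf1} by writing $f_4/f_1$ as a quotient of two classical theta functions, each of which has a known $3$-dissection. Since $\psi(q)=f_2^2/f_1$ and $\varphi(-q^2)=f_2^2/f_4$, we have
\[
\frac{f_4}{f_1}=\frac{\psi(q)}{\varphi(-q^2)}=\psi(q)\cdot\frac{1}{\varphi(-q^2)} .
\]

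For the first factor we use the classical $3$-dissection
\[
\psi(q)=f(q^3,q^6)+q\,\psi(q^9)=\frac{f_6f_9^2}{f_3f_{18}}+q\frac{f_{18}^2}{f_9}.
\]
For the second factor we start from the standard $3$-dissection of $1/\varphi(-q)$, which comes from $\varphi(-q)=\varphi(-q^9)-2q\,f(-q^3,-q^{15})$ by multiplying numerator and denominator by the conjugate quadratic factor:
\[
\frac{1}{\varphi(-q)}=\frac{f_6^4f_9^6}{f_3^8f_{18}^3}+2q\frac{f_6^3f_9^3}{f_3^7}+4q^2\frac{f_6^2f_{18}^3}{f_3^6}.
\]
Replacing $q$ by $q^2$ gives an expansion in powers $q^0,q^2,q^4$. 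Modulo $q^3$-classes these exponents fall in residue classes $0,2,1$. The $q^4$ term becomes $q\cdot q^3(\cdots)$, contributing to the class $1 \pmod 3$.

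Next we multiply the two-term dissection of $\psi(q)$ by the three-term dissection of $1/\varphi(-q^2)$. This gives six products, which we sort by the residue of the exponent of $q$ modulo $3$:
\begin{itemize}
\item class $0$: the main term times the main term, plus $q\,\psi(q^9)$ times the $q^8$-type term (exponent $1+4\cdot$, adjusted);
\item class $1$: two cross terms;
\item class $2$: the remaining two terms.
\end{itemize}
After the bookkeeping, each residue class is a sum of exactly two eta-quotients in $q^3$. To match \eqref{f4byf1} we must show that each such pair collapses to the single product stated:
\[
\frac{f_{12}f_{18}^4}{f_3^3f_{36}^2},\qquad \frac{f_6^2f_9^3f_{36}}{f_3^4f_{18}^2},\qquad 2\frac{f_6f_{18}f_{36}}{f_3^3}.
\]
The coefficient $2$ in the last product should arise naturally from the factors $2$ and $4$ in the dissection of $1/\varphi(-q)$. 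After replacing $q^3\to q$, each class identity is an eta-quotient identity in $q$ of level dividing $36$. We would verify each one either by reducing it to known theta identities, or by factoring out common products and recognising the remaining binomial sum as a $2$-dissection-type identity such as \eqref{f33byf1} or the cubic theta relations $a(q)=a(q^4)+6q\,\psi(q^2)\psi(q^6)$.

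The main obstacle is the collapse step: proving that each two-term sum equals a single product. If direct recognition fails, we would instead rewrite $f_4/f_1=(f_4/f_2^2)\cdot\psi(q)$ differently, namely as $\psi(q)\,\psi(-q)\cdot(\text{eta-quotient})$, or use the $3$-dissection of $f_2^2/f_1$ together with that of $f_4/f_2^2$. As a last resort we would fall back on the valence formula for the three (finitely many) eta-quotient identities, checking enough coefficients. Since the paper avoids modular forms, we would first try hard to obtain each collapse from Ramanujan's $f(a,b)$ product formulas (Jacobi triple product applied to $f(q,q^5)$, $f(q^3,q^6)$, etc.).
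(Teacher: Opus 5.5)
The paper gives no proof of this lemma; it is quoted from the literature. So your argument has to stand on its own, and as written it has a genuine gap.

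Your set-up is correct. You write $f_4/f_1=\psi(q)/\varphi(-q^2)$, multiply \eqref{f22byf1} by \eqref{f2byf12} at $q\to q^2$, and get two products in each residue class. One slip: the class-$0$ part is the product of the two leading terms plus $q\psi(q^9)$ times the $q^2$-term $2q^2f_{12}^3f_{18}^3/f_6^7$. It is not a ``$q^8$-type'' term.

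All of this is bookkeeping, though. The entire content of \eqref{f4byf1} lies in the collapse step, which you flag as the main obstacle and then do not carry out. Concretely, cancel the common eta-quotient factors and replace $q^3$ by $q$. The class-$0$ and class-$2$ pieces are both multiples of
$$f(q^3,q^6)\varphi(-q^{18})+2q^3\psi(q^9)f(-q^6,-q^{30}),$$
so both collapses are equivalent to
\[
\varphi(q)\varphi(-q^2)-\varphi(-q^3)\varphi(-q^6)=2q\,\frac{f_1f_6f_{12}^2}{f_3f_4}.
\]
The class-$1$ collapse is equivalent to
\[
\varphi^2(q)-\varphi^2(q^3)=4q\,\frac{f_2^3f_6f_{12}^2}{f_1f_3f_4^2}.
\]
Both identities are true; their coefficients agree through $q^{10}$. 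But they are nontrivial weight-one theta-function identities, and nothing in your proposal establishes them.

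The tools you suggest do not close this gap:
\begin{itemize}
\item You do not show how \eqref{f33byf1} or $a(q)=a(q^4)+6q\psi(q^2)\psi(q^6)$ would yield either identity.
\item ``Factoring out common products and recognising the binomial sum'' names no actual step.
\item The valence-formula fallback would be a legitimate proof. However, it applies equally well to the three components of \eqref{f4byf1} directly, which makes your dissection superfluous. You also have not specified the group, the cusp orders, or how many coefficients must be checked.
\end{itemize}

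To turn the outline into a proof, you need an actual derivation, or a precise reference, for the two displayed identities. The rest of your argument is routine.
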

	\begin{lma}[{\cite{mdb}}] The following 3-dissections hold:
		\begin{align}
			&\frac{f_2}{f_1^2} = \frac{f_6^4 f_9^6}{f_3^8 f_{18}^3} + 2q \frac{f_6^3 f_9^3}{f_3^7} + 4q^2 \frac{f_6^2 f_{18}^3}{f_3^6},\label{f2byf12}\\
			&\frac{f_1^2}{f_2} = \frac{f_9^2}{f_{18}} - 2q \frac{f_3 f_{18}^2}{f_6 f_9},\label{f12byf2}\\
			&\frac{f_2^2}{f_1} = \frac{f_6f_9^2}{f_3f_{18}} +q \frac{ f_{18}^2}{ f_9}.\label{f22byf1}
	\end{align}\end{lma}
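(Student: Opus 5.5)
The plan is to prove \eqref{f12byf2} and \eqref{f22byf1} directly from the series representations of the theta functions, by splitting the summation index into residue classes modulo $3$. Then \eqref{f2byf12} follows by inverting \eqref{f12byf2}.

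\textbf{Step 1: \eqref{f12byf2}.} Start from $f_1^2/f_2=\varphi(-q)=\sum_{n=-\infty}^{\infty}(-1)^nq^{n^2}$.
\begin{itemize}
\item The class $n=3m$ contributes $\sum_m(-1)^mq^{9m^2}=\varphi(-q^9)=f_9^2/f_{18}$.
\item The classes $n=3m+1$ and $n=3m-1$ give equal sums, each equal to $-q\sum_m(-1)^mq^{9m^2+6m}=-q\,f(-q^3,-q^{15})$.
\item By the Jacobi triple product, $f(-q^3,-q^{15})=(q^3;q^{18})_\infty(q^{15};q^{18})_\infty(q^{18};q^{18})_\infty$.
\item This product is $f_3f_{18}^2/(f_6f_9)$: remove from $(q^3;q^3)_\infty$ the factors with exponents $\equiv 0 \pmod 6$, and those $\equiv 9 \pmod{18}$, i.e.\ $f_9/f_{18}$.
\end{itemize}
This yields \eqref{f12byf2}.

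\textbf{Step 2: \eqref{f22byf1}.} Start from $f_2^2/f_1=\psi(q)=\sum_{n\ge0}q^{n(n+1)/2}$, or equivalently the bilateral form $f(q,q^3)$.
\begin{itemize}
\item We have $n(n+1)/2\equiv1\pmod3$ exactly when $n\equiv1\pmod 3$. Writing $n=3m+1$ gives exponent $1+9m(m+1)/2$, so this class contributes $q\,\psi(q^9)=q f_{18}^2/f_9$.
\item The classes $n\equiv0,2\pmod 3$ have exponents divisible by $3$. Using the bilateral form they combine into $f(q^3,q^6)$.
\item The triple product gives $f(q^3,q^6)=(-q^3;q^9)_\infty(-q^6;q^9)_\infty(q^9;q^9)_\infty = f_6f_9^2/(f_3f_{18})$, by the standard conversion $(-x;x)_\infty=f_{2}/f_1$ rescaled.
\end{itemize}
This yields \eqref{f22byf1}.

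\textbf{Step 3: \eqref{f2byf12}.} Set $a=f_9^2/f_{18}$ and $b=f_3f_{18}^2/(f_6f_9)$, so that $\varphi(-q)=a-2qb$. Then
\[
\frac{f_2}{f_1^2}=\frac{a^2+2qab+4q^2b^2}{a^3-8q^3b^3}.
\]
The three numerator terms, divided by the denominator, are exactly the three components of \eqref{f2byf12}, provided we can show
\[
a^3-8q^3b^3=\frac{f_3^8 f_{18}}{f_6^4 f_9^2}\cdot\frac{f_9^{6}}{f_{18}^{3}}\cdot\frac{f_{18}^{3}}{f_9^{6}}\cdots,
\]
that is, $a^3-8q^3b^3=\varphi(-q)\varphi(-\omega q)\varphi(-\omega^2q)$ with $\omega=e^{2\pi i/3}$. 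This product identity holds because replacing $q\mapsto\omega q$ fixes $a$ and $b$ and multiplies $q$ by $\omega$.

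Next, evaluate the product on the product side:
\begin{itemize}
\item $\prod_{j}(\omega^jq;\omega^jq)_\infty$ equals $f_3^4/f_9$, using $\prod_j(1-\omega^{jk}x)=(1-x^3)$ for $3\nmid k$ and $(1-x)^3$ for $3\mid k$.
\item The same computation at $q^2$ gives $f_6^4/f_{18}$.
\end{itemize}
Hence $a^3-8q^3b^3=\dfrac{f_3^8 f_{18}}{f_9^2 f_6^4}$.

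Substituting then gives, for instance, $a^2\cdot f_6^4f_9^2/(f_3^8f_{18}) = f_6^4f_9^6/(f_3^8f_{18}^3)$, which is the first term of \eqref{f2byf12}. The terms in $q$ and $q^2$ follow the same way.

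\textbf{Main obstacle.} I expect the only real work to be Step 3: evaluating the norm $a^3-8q^3b^3$ cleanly via the cube-root-of-unity product, and checking the resulting eta-quotient bookkeeping.
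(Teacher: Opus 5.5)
Your proof is correct. I checked every eta-quotient identity.

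\begin{itemize}
\item In Step 1, the residue-class splitting of $\varphi(-q)=\sum(-1)^nq^{n^2}$ together with the triple product for $f(-q^3,-q^{15})$ gives \eqref{f12byf2}.
\item In Step 2, the splitting of $\psi(q)$ together with the triple product for $f(q^3,q^6)$ gives \eqref{f22byf1}.
\item In Step 3, the norm computation $\prod_{j=0}^{2}\varphi(-\omega^jq)=f_3^8f_{18}/(f_6^4f_9^2)$, which is $\varphi^4(-q^3)/\varphi(-q^9)$, correctly inverts $\varphi(-q)=a-2qb$. The three quotients $a^2$, $2qab$, $4q^2b^2$ over this norm are exactly the three terms of \eqref{f2byf12}.
\end{itemize}

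The paper gives no proof of this lemma; it simply imports all three dissections from \cite{mdb}. So your argument is a genuinely different thing: a self-contained derivation of what the paper treats as a black box. It follows the standard route of splitting theta series modulo $3$, applying Jacobi's triple product, and rationalising $1/\varphi(-q)$ by multiplying by its conjugates under $q\mapsto\omega q$. The paper's citation buys brevity; your version buys independence from the reference at the cost of about a page.

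Two presentational fixes are worth making.
\begin{itemize}
\item The display in Step 3 is garbled: it contains the cancelling factors $\frac{f_9^{6}}{f_{18}^{3}}\cdot\frac{f_{18}^{3}}{f_9^{6}}$ and trailing dots. Replace it with the clean statement $a^3-8q^3b^3=f_3^8f_{18}/(f_6^4f_9^2)$, and derive that only after the cube-root-of-unity product.
\item In Step 2, write the recombination explicitly, since "using the bilateral form" is terse. For example, write $\sum_{m\ge0}q^{3m(3m+1)/2}+\sum_{m\ge1}q^{3m(3m-1)/2}=\sum_{m\in\mathbb{Z}}q^{(9m^2+3m)/2}=f(q^3,q^6)$.
\end{itemize}
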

	\begin{lma} [{\cite{hirschhorn2014congruence}}] The following 3-dissection hold:
		
		\begin{align}
			f_1f_2=\frac{f_6\,f_9^4}{f_3f_{18}^2}
			-qf_9f_{18}-2q^2\frac{f_3\,f_{18}^4}{f_6\,f_9^2}.\label{f1f2}
		\end{align}
	\end{lma}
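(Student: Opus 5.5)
The identity \eqref{f1f2} can be obtained directly from the two $3$-dissections already recorded in \eqref{f12byf2} and \eqref{f22byf1}. These avoid any fresh manipulation of the pentagonal or triple product series. The key observation is the factorization
\[
f_1f_2=\frac{f_1^2}{f_2}\cdot\frac{f_2^2}{f_1},
\]
so multiplying the two known dissections should give the $3$-dissection of $f_1f_2$ term by term.

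To fix notation, I write \eqref{f12byf2} as $\frac{f_1^2}{f_2}=A-2qB$ and \eqref{f22byf1} as $\frac{f_2^2}{f_1}=C+qD$, where
\[
A=\frac{f_9^2}{f_{18}},\quad B=\frac{f_3f_{18}^2}{f_6f_9},\quad C=\frac{f_6f_9^2}{f_3f_{18}},\quad D=\frac{f_{18}^2}{f_9}.
\]
Each of $A,B,C,D$ is a series in $q^3$. Expanding the product therefore gives
\[
f_1f_2=AC+q\,(AD-2BC)-2q^2BD,
\]
which is already a $3$-dissection, with no $q^3$ term to fold back.

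It remains to simplify each coefficient:
\begin{itemize}
\item the constant term is $AC=\frac{f_6f_9^4}{f_3f_{18}^2}$;
\item in the $q$-coefficient, $AD=f_9f_{18}$ and $2BC=2f_9f_{18}$, so $AD-2BC=-f_9f_{18}$;
\item the $q^2$-coefficient is $-2BD=-2\frac{f_3f_{18}^4}{f_6f_9^2}$.
\end{itemize}
These are exactly the three components of \eqref{f1f2}.

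The only step needing care is the middle coefficient, where two terms must combine. A sign or exponent slip there would break the identity, but the cancellation is immediate once $A,B,C,D$ are written out. No genuine obstacle is expected, since the proof reduces entirely to the cited dissections \eqref{f12byf2} and \eqref{f22byf1}.
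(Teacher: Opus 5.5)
Your proof is correct. With $A,B,C,D$ as you define them, $AC=\frac{f_6f_9^4}{f_3f_{18}^2}$, $AD=BC=f_9f_{18}$ and $BD=\frac{f_3f_{18}^4}{f_6f_9^2}$. Hence $(A-2qB)(C+qD)$ is exactly the right-hand side of \eqref{f1f2}, and the first few coefficients, $1-q-2q^2+q^3+2q^5+\cdots$, agree on both sides.

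The paper gives no argument for this lemma; it imports \eqref{f1f2} from \cite{hirschhorn2014congruence} as a separate input. You instead obtain it as a corollary of two dissections already in the preliminaries, \eqref{f12byf2} and \eqref{f22byf1}, through the factorization $f_1f_2=\varphi(-q)\psi(q)$.

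Those two dissections are themselves elementary. Split the summation index of $\sum(-1)^nq^{n^2}$ and of $\sum_{n\ge0}q^{n(n+1)/2}$ modulo $3$, then apply the Jacobi triple product. So your route makes the lemma self-contained and reduces the number of independent external inputs, while the citation only saves a few lines.

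Since you prove a genuine identity of power series rather than merely matching components, you do not need the uniqueness of the $3$-dissection.
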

	
	\begin{lma} [{\cite{xia2013analogues}}] The following 2-dissection hold:
		\begin{align}      
			\frac{f_3^4}{f_1^4}
			&=\frac{f_4^8 f_6^2 f_{12}^{4}}{f_2^{10} f_8^2 f_{24}^{2}}
			+ 4q \frac{f_4^5 f_6^3 f_{12}}{f_2^9}
			+ 4q^2 \frac{f_4^2 f_6^4 f_8^2 f_{24}^{2}}{f_2^8 f_{12}^{2}}.\label{f34f14}
		\end{align}
	\end{lma}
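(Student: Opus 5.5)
The plan is to obtain the 2-dissection of $f_3^4/f_1^4$ by squaring a 2-dissection of $f_3^2/f_1^2$. That dissection in turn comes from multiplying two 2-dissections already recorded above. Write
\[
\frac{f_3^2}{f_1^2}=\frac{f_3^3}{f_1}\cdot\frac{1}{f_1f_3},
\]
and insert \eqref{f33byf1} and \eqref{1byf1f3}. Denote these as $A_1+qB_1$ and $A_2+qB_2$, where $A_i,B_i$ are products of $f_{2k}$'s. The product then has even part $A_1A_2+q^2B_1B_2$ and odd part $q(A_1B_2+B_1A_2)$.

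The target is the known form
\[
\frac{f_3^2}{f_1^2}=\frac{f_4^4f_6f_{12}^2}{f_2^5f_8f_{24}}+2q\,\frac{f_4f_6^2f_8f_{24}}{f_2^4f_{12}} .
\]
So I must show two things:
\begin{itemize}
\item $A_1A_2+q^2B_1B_2$ equals the first term on the right;
\item $A_1B_2+B_1A_2$ equals $2f_4f_6^2f_8f_{24}/(f_2^4f_{12})$.
\end{itemize}
In both, every $f$ has even index, so I replace $q^2$ by $q$. After clearing common factors, each becomes a two-term identity among eta-quotients of levels dividing $12$. I would rewrite each term via $\varphi(\pm q)$, $\varphi(\pm q^3)$, $\psi(q)$, $\psi(q^3)$ using the product formulas of Section 2. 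For the odd part, the resulting identity is a classical one for the cubic theta functions: a relation of the shape $\varphi(q)\varphi(q^3)-\varphi(-q)\varphi(-q^3)=4q\,\psi(q^2)\psi(q^6)$, together with its companion for the sum, which is equivalent to Ramanujan's $f(a,b)$ addition formulas (Entry 25 type).

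This verification is the main obstacle. If matching to a named identity fails, I would fall back on a different route to the same intermediate dissection. Take the odd and even parts of \eqref{f34f14}-type series directly from the product $\psi(q)\psi(q^3)$-representation, using $\psi(q)\psi(q^3)=f(q,q^5)\varphi(q^6)+q\,f(q^2,q^4)\psi(q^{12})$-style splitting of the double theta series by the parity of the summation indices.

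Once the dissection of $f_3^2/f_1^2$ is established, squaring it gives the statement at once:
\[
\Bigl(\tfrac{f_4^4f_6f_{12}^2}{f_2^5f_8f_{24}}\Bigr)^2=\tfrac{f_4^8f_6^2f_{12}^4}{f_2^{10}f_8^2f_{24}^2},\qquad
2\cdot 2q\,\tfrac{f_4^5f_6^3f_{12}}{f_2^9},\qquad
4q^2\tfrac{f_4^2f_6^4f_8^2f_{24}^2}{f_2^8f_{12}^2}.
\]
These are exactly the three terms of \eqref{f34f14}, so only routine exponent bookkeeping remains after the key identity.
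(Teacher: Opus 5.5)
Your final step is correct. The statement is exactly the square of $f_3^2/f_1^2=\mathcal{E}+\mathcal{O}$, where $\mathcal{E}=\frac{f_4^4f_6f_{12}^2}{f_2^5f_8f_{24}}$ and $\mathcal{O}=2q\frac{f_4f_6^2f_8f_{24}}{f_2^4f_{12}}$, and your three terms check. The paper gives no argument and simply quotes the result from Xia--Yao, so citing that $f_3^2/f_1^2$ dissection would put you level with it.

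The gap is in your self-contained derivation of that dissection. The odd part is the entire content, and you neither prove it nor identify it correctly. Your cross term is $A_1B_2+B_1A_2=\frac{f_4^8f_{24}^2}{f_2^6f_8^2f_{12}^2}+\frac{f_8^2f_{12}^8}{f_2^2f_4^2f_6^4f_{24}^2}$. Put $q^2\to q$, divide by the target, and use $\varphi^2(-q^2)=\varphi(q)\varphi(-q)$ and $\varphi^2(-q^6)=\varphi(q^3)\varphi(-q^3)$. The required relation becomes
\[
\varphi^2(q)+\varphi^2(q^3)=\frac{2\,\varphi(q)\,\varphi(-q^3)\,\varphi(-q^6)}{\varphi(-q^2)}.
\]
This is a sum-of-squares identity, not the difference-of-products identity $\varphi(q)\varphi(q^3)-\varphi(-q)\varphi(-q^3)=4q\psi(q^2)\psi(q^6)$ that you cite. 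Nothing in your plan derives it from that identity or from Entry~25; it does follow, but only by the route below. Your fallback does not address this either, since $\psi(q)\psi(q^3)=f_2^2f_6^2/(f_1f_3)$ is not $f_3^2/f_1^2$.

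The plan is easily repaired, using exactly the identity you named.

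\emph{Even part.} It needs no new input. Multiply $A_1A_2+q^2B_1B_2=\mathcal{E}$ by $f_2^4f_6^2/(f_4^2f_{12}^2)$ and put $q^2\to q$. This gives $\frac{f_4^2f_6^2}{f_{12}^2}+q\frac{f_2^2f_{12}^2}{f_4^2}=\frac{f_2^2f_3^3}{f_1f_4f_{12}}$, which is \eqref{f33byf1} multiplied by $f_2^2/(f_4f_{12})$.

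\emph{Odd part.} Skip the cross term and use $q\to-q$. Since $f_1(-q)=f_2^3/(f_1f_4)$, with $F=f_3^2/f_1^2$ one gets $\mathcal{E}^2-\mathcal{O}^2=F(q)F(-q)=\frac{f_4^2f_6^6}{f_2^6f_{12}^2}$. Insert the known $\mathcal{E}$, divide by $f_4^2f_6^4/(f_2^8f_{12}^2)$, put $q^2\to q$, and divide by $f_2f_6$. The claim $\mathcal{O}^2=4q^2\frac{f_4^2f_6^4f_8^2f_{24}^2}{f_2^8f_{12}^2}$ then becomes precisely $\varphi(q)\varphi(q^3)-\varphi(-q)\varphi(-q^3)=4q\psi(q^2)\psi(q^6)$.

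That identity is elementary:
\begin{itemize}
\item the left side is $2\sum_{m+n\ \mathrm{odd}}q^{m^2+3n^2}$;
\item the right side is $\sum_{a,b\ \mathrm{odd}}q^{(a^2+3b^2)/4}$;
\item $(m,n,\varepsilon)\mapsto(m+3n,\varepsilon(m-n))$, with $\varepsilon=\pm1$, is a bijection from pairs with $m+n$ odd (times the sign) onto pairs of odd integers, and it satisfies $a^2+3b^2=4(m^2+3n^2)$.
\end{itemize}
Hence $\mathcal{O}=\pm2q\frac{f_4f_6^2f_8f_{24}}{f_2^4f_{12}}$, and the sign is fixed by the coefficient $2$ of $q$ in $f_3^2/f_1^2$. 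Your squaring step then finishes the proof.
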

	\begin{lma}[{\cite[Theorem ~2.2]{cui2013}}]\label{a1}
		For any prime $p\ge5$,
		\begin{equation}\label{a17}
			f_1=\sum\limits_{\substack{k=\frac{1-p}{2}\\k\neq\frac{\pm p-1}{6}
			}}^{\frac{p-1}{2}}{(-1)^kq^{\frac{3k^2+k}{2}}f\left(-q^{\frac{3p^2+(6k+1)p}{2}},-q^{\frac{3p^2-(6k+1)p}{2}}\right)}+(-1)^{\frac{\pm p-1}{6}}q^{\frac{p^2-1}{24}}f_{p^2},
		\end{equation}
		where
		\begin{equation}
			\dfrac{\pm p-1}{6}:=\begin{cases}
				\frac{p-1}{6}, & \text{if $p \equiv 1 \pmod{6}$},\\
				\frac{-p-1}{6}, & \text{if $p \equiv -1 \pmod{6}$}.\nonumber
			\end{cases}
		\end{equation}
	\end{lma}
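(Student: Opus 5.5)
The plan is the classical route through the Jacobi triple product, as in \cite[Theorem~2.2]{cui2013}. By Euler's pentagonal number theorem (the case $a=-q$, $b=-q^2$ of $f(a,b)$),
\[
f_1=f(-q,-q^2)=\sum_{n=-\infty}^{\infty}(-1)^n q^{\frac{3n^2+n}{2}} .
\]
I would split the summation index according to its residue modulo $p$, writing $n=pm+k$ with $m\in\mathbb{Z}$ and $k$ running over the complete residue system $\frac{1-p}{2}\le k\le \frac{p-1}{2}$; this range is symmetric because $p$ is odd. The exponent expands as
\[
\frac{3(pm+k)^2+(pm+k)}{2}=\frac{3k^2+k}{2}+\frac{3p^2m^2+(6k+1)pm}{2},
\]
and since $p$ is odd, $(-1)^{pm+k}=(-1)^k(-1)^m$.

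Next I would recognise each inner sum over $m$ as a theta function. With $a=-q^{\frac{3p^2+(6k+1)p}{2}}$ and $b=-q^{\frac{3p^2-(6k+1)p}{2}}$ we have $ab=q^{3p^2}$ and $a/b=q^{(6k+1)p}$. Hence
\[
a^{\frac{m(m+1)}{2}}b^{\frac{m(m-1)}{2}}=(-1)^{m^2}q^{\frac{3p^2m^2+(6k+1)pm}{2}}
=(-1)^m q^{\frac{3p^2m^2+(6k+1)pm}{2}} .
\]
So the inner sum for each $k$ is exactly $f\bigl(-q^{\frac{3p^2+(6k+1)p}{2}},-q^{\frac{3p^2-(6k+1)p}{2}}\bigr)$. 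This gives the general term of \eqref{a17}, multiplied by $(-1)^kq^{\frac{3k^2+k}{2}}$.

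It remains to handle the one exceptional residue. Since $p\ge5$ we have $p\equiv\pm1\pmod 6$, so there is exactly one $k$ in the range with $6k+1=\pm p$, namely $k=\frac{\pm p-1}{6}$. Its absolute value is less than $\frac{p}{6}$, so it does lie in the range. For this $k$:
\begin{itemize}
\item The two exponents become $\frac{3p^2\pm p^2}{2}$, that is, $2p^2$ and $p^2$ in some order.
\item Since $f(a,b)=f(b,a)$, the inner theta function is $f(-q^{p^2},-q^{2p^2})=f_{p^2}$.
\item The prefactor exponent is $\frac{3k^2+k}{2}=\frac{(6k+1)^2-1}{24}=\frac{p^2-1}{24}$.
\end{itemize}
This produces the isolated term $(-1)^{\frac{\pm p-1}{6}}q^{\frac{p^2-1}{24}}f_{p^2}$, and excluding this $k$ from the main sum gives \eqref{a17}.

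There is no real obstacle in this argument. The only delicate points are the parity bookkeeping $(-1)^{m^2}=(-1)^m$ and checking that the exceptional $k$ lies in the chosen range. Rearranging the series is justified by absolute convergence for $|q|<1$.
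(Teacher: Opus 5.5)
Your proof is correct and complete. Writing $n=pm+k$ in the pentagonal series $f_1=\sum_{n}(-1)^nq^{(3n^2+n)/2}$, recognising each inner sum as $f(-q^{\frac{3p^2+(6k+1)p}{2}},-q^{\frac{3p^2-(6k+1)p}{2}})$, and isolating the unique $k$ with $6k+1=\pm p$ is the standard argument behind \cite[Theorem~2.2]{cui2013}, which the paper quotes without proof; your bookkeeping (the sign $(-1)^{pm+k}=(-1)^{m+k}$, the identity $\frac{3k^2+k}{2}=\frac{(6k+1)^2-1}{24}$, the symmetry $f(a,b)=f(b,a)$, and the exceptional $k$ lying in the range) all checks out.
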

	
	\begin{lma}[{\cite[Theorem ~2.1]{cui2013}}]
		For any odd prime p,
		\begin{align}
			\psi(q)=\sum_{m=0}^{\frac{p-3}{2}}q^{\frac{m^{2}+m}{2}}f\left ( q^\frac{p^2+(2m+1)p}{2},q^\frac{p^2-(2m+1)p}{2} \right )+q^{\frac{p^{2}-1}{8}}\frac{f_{2p^{2}}^{2}}{f_{p^{2}}}.\label{a2}
		\end{align}
	\end{lma}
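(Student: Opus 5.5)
The plan is a direct residue-class splitting of the series for $\psi(q)$ modulo $p$. Write
\[
\psi(q)=\sum_{n\ge0}q^{n(n+1)/2}=\frac12\sum_{n=-\infty}^{\infty}q^{n(n+1)/2}.
\]
The second form holds because the involution $n\mapsto -1-n$ of $\mathbb{Z}$ has no fixed points, preserves $T_n:=n(n+1)/2$, and exchanges $n\ge0$ with $n\le -1$.

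Every $n\in\mathbb{Z}$ can be written uniquely as $n=pk+m$ with $k\in\mathbb{Z}$ and $0\le m\le p-1$. A direct expansion gives
\[
T_{pk+m}=\frac{p^2k^2}{2}+\frac{(2m+1)pk}{2}+\frac{m^2+m}{2}.
\]
Next, match this with the definition of $f(a,b)$. With $a=q^{A}$ and $b=q^{B}$, the general term of $f(a,b)$ has exponent $\tfrac{A+B}{2}k^2+\tfrac{A-B}{2}k$. Choosing
\[
A=\frac{p^2+(2m+1)p}{2},\qquad B=\frac{p^2-(2m+1)p}{2}
\]
gives $A+B=p^2$ and $A-B=(2m+1)p$. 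Both exponents are integers, since $p$ is odd and $p^2\pm(2m+1)p$ is even. Therefore
\[
\sum_{k\in\mathbb{Z}}q^{T_{pk+m}}=q^{\frac{m^2+m}{2}}f\left(q^{\frac{p^2+(2m+1)p}{2}},q^{\frac{p^2-(2m+1)p}{2}}\right)=:S_m,
\]
and $\psi(q)=\tfrac12\sum_{m=0}^{p-1}S_m$.

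To halve the sum, use the involution $n\mapsto -1-n$ again. It maps the residue class $m$ bijectively onto the class $p-1-m$ and preserves $T_n$, so $S_m=S_{p-1-m}$. The classes therefore pair off as $\{m,\,p-1-m\}$ for $0\le m\le \tfrac{p-3}{2}$, leaving the single self-paired class $m=\tfrac{p-1}{2}$. This yields
\[
\psi(q)=\sum_{m=0}^{\frac{p-3}{2}}S_m+\tfrac12 S_{\frac{p-1}{2}}.
\]

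For the middle class we have $2m+1=p$, so $A=p^2$ and $B=0$, and the prefactor is $q^{T_{(p-1)/2}}=q^{(p^2-1)/8}$. Hence
\[
S_{\frac{p-1}{2}}=q^{\frac{p^2-1}{8}}f(q^{p^2},1)=q^{\frac{p^2-1}{8}}\sum_{k\in\mathbb{Z}}q^{p^2k(k+1)/2}=2q^{\frac{p^2-1}{8}}\psi(q^{p^2}),
\]
where the last step applies the same two-to-one observation to $\sum_{k\in\mathbb{Z}}q^{T_k}$. Finally, the product formula $\psi(q)=f_2^2/f_1$ with $q\to q^{p^2}$ gives $\psi(q^{p^2})=f_{2p^2}^2/f_{p^2}$, which is \eqref{a2}.

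The only delicate point is this bookkeeping of the factor $\tfrac12$: making sure that the pairing $m\leftrightarrow p-1-m$ really identifies the two $f$-series, which is why I argue through the bijection on $n$ rather than by comparing the $f$-arguments directly. I also need to check that the central class contributes exactly $\psi(q^{p^2})$ and not twice it. Everything else is routine algebra.
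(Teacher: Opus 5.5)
Your proof is correct and complete. The paper gives no argument of its own for this lemma; it simply quotes Theorem~2.1 of Cui and Gu, so there is no internal proof to compare with. Your splitting of $\tfrac12\sum_{n\in\mathbb{Z}}q^{n(n+1)/2}$ along $n=pk+m$ is the standard way such $p$-dissections are proved.

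Both points you flagged are handled correctly. Pairing the classes $m$ and $p-1-m$ through the involution $n\mapsto-1-n$ means you never have to reconcile the two $f$-series via the quasi-periodicity $f(a,b)=a\,f(a^2b,a^{-1})$. The identity $f(q^{p^2},1)=2\psi(q^{p^2})$ exactly absorbs the factor $\tfrac12$ on the central class.

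If you prefer to avoid the $\tfrac12$ altogether, split the one-sided sum $\sum_{n\ge0}q^{T_n}$ instead. For $0\le m\le\frac{p-3}{2}$, the terms with $n=pk+m$ and $n=pk+p-1-m$, $k\ge0$, together give the two-sided sum $S_m$. The central class with $k\ge0$ gives $q^{(p^2-1)/8}\psi(q^{p^2})$ directly, because $T_{pk+(p-1)/2}=p^2T_k+\frac{p^2-1}{8}$.

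The remark the paper attaches to the lemma also follows in one line from your setup. The congruence $\frac{m^2+m}{2}\equiv\frac{p^2-1}{8}\pmod p$ would force $(2m+1)^2\equiv p^2\equiv0\pmod p$, i.e.\ $p\mid 2m+1$. That is impossible for $0\le m\le\frac{p-3}{2}$.
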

	Also,
	\begin{equation*}
		\frac{m^{2}+m}{2}\not \equiv \frac{p^{2}-1}{8} \pmod{p} {\hspace{1mm}} \text{for} {\hspace{1mm}} 0\leq m\leq \frac{p-3}{2}.
	\end{equation*}
	
	\begin{lma}[{\cite[Theorem 2.1]{LW}}]
		For any odd prime $p$,
		\begin{equation}
			f_{1}^{3}=\sum_{k=-\frac{p-1}{2}}^{\frac{p-3}{2}}(-1)^kq^{\frac{k^{2}+k}{2}}B_k(q^p)+(-1)^{\frac{p-1}{2}}pq^{\frac{p^{2}-1}{8}}f_{p^{2}}^{3},\label{abc1}
		\end{equation}
	\end{lma}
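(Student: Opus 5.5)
The plan is to derive the $p$-dissection directly from Jacobi's identity
\[
f_1^3=\sum_{n=0}^{\infty}(-1)^n(2n+1)q^{n(n+1)/2}.
\]
Since $B_k$ is defined only after the statement, I will simply take $B_k(q^p)$ to be whatever series the $k$-th residue class produces, and record it explicitly.

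\textbf{Step 1 (bilateral form).} Under $n\mapsto -n-1$ the summand $(-1)^n(2n+1)q^{n(n+1)/2}$ is invariant: the exponent is unchanged, and both the sign and the factor $2n+1$ flip. Hence
\[
f_1^3=\tfrac12\sum_{n=-\infty}^{\infty}(-1)^n(2n+1)q^{n(n+1)/2}.
\]
Working over all of $\mathbb{Z}$ lets us split $n$ into residue classes modulo $p$ without worrying about boundary terms.

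\textbf{Step 2 (residue classes).} Write $n=pj+k$ with $j\in\mathbb{Z}$ and $k$ running over the complete residue system $-\frac{p-1}{2}\le k\le \frac{p-1}{2}$. Then
\[
\frac{(pj+k)(pj+k+1)}{2}=\frac{k^2+k}{2}+p\cdot\frac{pj^2+(2k+1)j}{2}.
\]
Here $pj^2+(2k+1)j\equiv j^2+j\equiv 0\pmod 2$, because $p$ and $2k+1$ are odd, so the second term is $p$ times an integer. The sign factors as $(-1)^{pj+k}=(-1)^k(-1)^j$. For $k\ne\frac{p-1}{2}$ the class $k$ therefore contributes $(-1)^kq^{(k^2+k)/2}B_k(q^p)$, where
\[
B_k(q)=\tfrac12\sum_{j\in\mathbb{Z}}(-1)^j(2pj+2k+1)\,q^{(pj^2+(2k+1)j)/2}.
\]
This accounts for the sum over $-\frac{p-1}{2}\le k\le\frac{p-3}{2}$.

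\textbf{Step 3 (the special class $k=\frac{p-1}{2}$).} Here $2n+1=p(2j+1)$, and the exponent becomes
\[
\frac{p^2-1}{8}+p^2\,\frac{j(j+1)}{2}.
\]
The sign is $(-1)^{(p-1)/2}(-1)^j$. This class therefore equals
\[
(-1)^{\frac{p-1}{2}}\,p\,q^{\frac{p^2-1}{8}}\cdot\tfrac12\sum_{j\in\mathbb{Z}}(-1)^j(2j+1)q^{p^2j(j+1)/2}.
\]
Applying Step 1 in reverse with $q$ replaced by $q^{p^2}$ identifies the last factor as $f_{p^2}^3$. This yields the final term of \eqref{abc1}.

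\textbf{Step 4 (separating exponent classes).} For applications one also wants the analogue of the remark after \eqref{a2}:
\[
\frac{k^2+k}{2}\not\equiv\frac{p^2-1}{8}\pmod p\qquad\text{for } k\ne\tfrac{p-1}{2}.
\]
Multiplying by $8$, the congruence $\frac{k^2+k}{2}\equiv\frac{p^2-1}{8}$ is equivalent to $(2k+1)^2\equiv 0\pmod p$. Within the chosen residue range this forces $k=\frac{p-1}{2}$.

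The only delicate point is Step 1, together with the parity check in Step 2. Keeping the half-factor consistent between $B_k$ and the $f_{p^2}^3$ term is where a sign or factor-of-two slip would most likely occur; everything else is bookkeeping.
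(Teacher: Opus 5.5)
Your proof is correct. The symmetrization $n\mapsto -n-1$, the split $n=pj+k$ with $|k|\le\frac{p-1}{2}$, the parity check showing $\frac{pj^2+(2k+1)j}{2}\in\mathbb{Z}$, and the identification of the class $k=\frac{p-1}{2}$ with $(-1)^{\frac{p-1}{2}}p\,q^{\frac{p^2-1}{8}}f_{p^2}^3$ (Step 1 applied at $q^{p^2}$) all go through. The series you record for $B_k$ coincides exactly with the paper's definition, including the factor $\frac12$.

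The paper gives no proof of this lemma; it only imports it from \cite{LW}. So there is no internal argument to compare against, and your write-up supplies the standard self-contained derivation from Jacobi's identity. It also makes explicit two facts the paper uses only informally in the remark before Theorem~\ref{thpdiss2}. First, each piece $q^{(k^2+k)/2}B_k(q^p)$ is supported on a single residue class modulo $p$. Second, by your Step 4, the class $\frac{p^2-1}{8}$ of the $f_{p^2}^3$ term is met by no other $k$, since $(2k+1)^2\equiv 0\pmod p$ forces $k=\frac{p-1}{2}$ in your range.

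One small rigor point is worth a sentence: regrouping the bilateral sum by residue class is legitimate. Each coefficient of $q^N$ receives at most two contributions, from $n$ and $-n-1$; alternatively, the series converges absolutely for $|q|<1$.
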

	where $$B_k(q):=\frac{1}{2}\sum_{n=-\infty}^{\infty}(-1)^n(2pn+2k+1)q^{\dfrac{pn^2+(2k+1)n}{2}}.$$
	
	\section{Congruences for $\overline{p}_{2\ell}(n)$ and $\overline{p}_{3\ell}(n)$}
	In the following theorem, we derive the 2-dissections and 3-dissections of the function $\overline{p}_{2\ell}(n)$ and $\overline{p}_{3\ell}(n)$ respectively. These dissections play a pivotal role in establishing the arithmetic properties of the partition function $\overline{p}_{\ell}(n)$.
	\begin{theorem}\label{t1} For all $n\geq0$ and $\ell\geq1$,
		\begin{align}
			\sum_{n=0}^{\infty}\overline{p}_{2\ell}(2n)q^n=\frac{f_{2\ell}f_4^5}{f_{\ell}^2f_1^4f_{8}^2}=\frac{\varphi(q^2)}{\varphi(-q^\ell)\varphi^2(-q)},\label{res4}\\
			\sum_{n=0}^{\infty}\overline{p}_{2\ell}(2n+1)q^n=2\,\frac{f_{2\ell}f_2^2f_{8}^2}
			{f_{\ell}^2f_1^4f_4} =\frac{\psi(q^4)}{\varphi(-q^\ell)\varphi^2(-q)},\label{res5}\\
			\sum_{n=0}^{\infty}\overline{p}_{3\ell}(3n)q^n=
			\frac{f_{2\ell}f_2^4f_3^6}{f_{\ell}^2f_1^8f_{6}^3}=\frac{\varphi^3(-q^3)}{\varphi(-q^\ell)\varphi^4(-q)},\label{res1}\\
			\sum_{n=0}^{\infty}\overline{p}_{3\ell}(3n+1)q^n=
			2\,\frac{f_{2\ell}f_2^3f_3^3}{f_{\ell}^2f_1^7},\label{res2}\\
			\sum_{n=0}^{\infty}\overline{p}_{3\ell}(3n+2)q^n=
			4\,\frac{f_{2\ell}f_2^2f_{6}^3}{f_{\ell}^2f_1^6}.\label{res3}
		\end{align}
	\end{theorem}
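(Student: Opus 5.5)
The plan is to isolate the factor that depends only on $q$ (not on $q^\ell$), dissect that factor, and leave the $\ell$-dependent factor untouched because it is already a function of $q^2$ (respectively $q^3$).

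\textbf{The 2-dissection.} Replacing $\ell$ by $2\ell$ in \eqref{pt} gives
\[
\sum_{n\ge0}\overline{p}_{2\ell}(n)q^n=\frac{f_{4\ell}}{f_{2\ell}^2}\cdot\frac{f_2}{f_1^2}.
\]
The first factor is a function of $q^2$ alone. So I only need the 2-dissection of $f_2/f_1^2$, and I obtain it by multiplying \eqref{1byf12} by $f_2$:
\[
\frac{f_2}{f_1^2}=\frac{f_8^5}{f_2^4 f_{16}^2}+2q\frac{f_4^2 f_{16}^2}{f_2^4 f_8}.
\]
I then extract the even and odd parts and replace $q^2$ by $q$. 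This gives $\frac{f_{2\ell}}{f_\ell^2}\cdot\frac{f_4^5}{f_1^4f_8^2}$ and $2\frac{f_{2\ell}}{f_\ell^2}\cdot\frac{f_2^2f_8^2}{f_1^4f_4}$, which are \eqref{res4} and \eqref{res5}.

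For the theta-function forms I use the classical product expressions. These are $\varphi(-q)=f_1^2/f_2$, obtained from $\varphi(q)=f_2^5/(f_1^2f_4^2)$ by $q\to-q$ via $f_1\to f_2^3/(f_1f_4)$, and $\psi(q)=f_2^2/f_1$. From them, $1/\varphi(-q^\ell)=f_{2\ell}/f_\ell^2$ and $1/\varphi^2(-q)=f_2^2/f_1^4$. Also $\varphi(q^2)=f_4^5/(f_2^2f_8^2)$ and $\psi(q^4)=f_8^2/f_4$, and matching these factors is immediate. I note one discrepancy: the stated \eqref{res5} carries a factor $2$ in the product form but not in the theta form. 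So I expect the theta form there to require $2\psi(q^4)$; I will check this and record it.

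\textbf{The 3-dissection.} Similarly, with $3\ell$ in place of $\ell$, the $\ell$-factor $f_{6\ell}/f_{3\ell}^2$ is a function of $q^3$. I then substitute \eqref{f2byf12}:
\[
\frac{f_2}{f_1^2}=\frac{f_6^4 f_9^6}{f_3^8 f_{18}^3}+2q\frac{f_6^3 f_9^3}{f_3^7}+4q^2\frac{f_6^2 f_{18}^3}{f_3^6}.
\]
Picking out the terms with exponents $\equiv0,1,2\pmod 3$ and replacing $q^3$ by $q$ yields \eqref{res1}, \eqref{res2} and \eqref{res3} directly. The theta form of \eqref{res1} is checked in the same way: $\varphi^3(-q^3)/\varphi^4(-q)=\frac{f_3^6}{f_6^3}\cdot\frac{f_2^4}{f_1^8}$.

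\textbf{Main obstacle.} The proof is routine; the only real risk is bookkeeping. The main step is verifying the cited dissections \eqref{1byf12} and \eqref{f2byf12}, which I take from the literature, together with the exponent matching in the theta-function identities. In particular I will recheck the constant in \eqref{res5}'s theta form as noted above.
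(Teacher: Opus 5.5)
Your proof is correct and is the paper's own argument: substitute \eqref{1byf12} (multiplied by $f_2$) and \eqref{f2byf12} into \eqref{pt} with $\ell$ replaced by $2\ell$ and $3\ell$, observe that $f_{4\ell}/f_{2\ell}^2$ and $f_{6\ell}/f_{3\ell}^2$ are series in $q^2$ and $q^3$ respectively, and extract the residue classes. The discrepancy you flagged is genuine and is missed by the paper's proof, which never checks the theta forms: $\psi(q^4)/(\varphi(-q^\ell)\varphi^2(-q))=f_{2\ell}f_2^2f_8^2/(f_\ell^2f_1^4f_4)$ has constant term $1$ whereas $\overline{p}_{2\ell}(1)=2$, so the theta form in \eqref{res5} must read $2\psi(q^4)/(\varphi(-q^\ell)\varphi^2(-q))$, which is also the form used later in \eqref{res7}.
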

	\begin{proof}
		Invoking \eqref{f2byf12} in \eqref{pt} with $\ell=3\ell$, we have
		\begin{align}\label{r1}
			\sum_{n=0}^{\infty}\overline{p}_{3\ell}(n)q^n=
			\frac{f_{6\ell}f_6^4f_9^6}{f_{3\ell}^2f_3^8f_{18}^3}
			+2q\,\frac{f_{6\ell}f_6^3f_9^3}{f_{3\ell}^2f_3^7}
			+4q^2\,\frac{f_{6\ell}f_6^2f_{18}^3}{f_{3\ell}^2f_3^6}.
		\end{align}
		Equations \eqref{res1}--\eqref{res3} directly follows by extracting the terms involving $q^{3n}$, $q^{3n+1}$ and $q^{3n+2}$ respectively from the equation \eqref{r1}. Now, invoking \eqref{1byf12} in \eqref{pt} with $\ell=2\ell$, we have
		\begin{align}
			\sum_{n=0}^{\infty}\overline{p}_{2\ell}(n)q^n=\frac{f_{4\ell}f_8^5}{f_{2\ell}^2f_2^4f_{16}^2}
			+2q\,\frac{f_{4\ell}f_4^2f_{16}^2}
			{f_{2\ell}^2f_2^4f_8}. \label{r2}
		\end{align}
		Equations \eqref{res4} and \eqref{res5} are obtained  by extracting the terms involving $q^{2n}$ and $q^{2n+1}$ respectively from the equation \eqref{r2}. This completes the proof.
	\end{proof}
	\begin{corollary}\label{c1} Suppose $t$ and $\ell$ are natural numbers and $4|t$ and $9|\ell$, then
		\begin{align}
			\overline{p}_{2t}(4n+2)\equiv 0\pmod{4},\label{cor1}\\
			\overline{p}_{2t}(4n+3)\equiv 0\pmod{8},\label{cor2}\\
			\overline{p}_{3\ell}(9n+3j)\equiv 0\pmod{8}, \,\,\text{for}\,\,j=1,2.\label{cor3}
		\end{align}
	\end{corollary}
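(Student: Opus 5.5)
The plan is to read each congruence off the dissections in Theorem~\ref{t1}, splitting once more with the dissection lemmas of Section~2. The key point is that the hypotheses on $t$ and $\ell$ make the $\ell$-dependent factors lacunary, so they pass through the extraction unchanged.

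For \eqref{cor1} and \eqref{cor2}, apply \eqref{res4} and \eqref{res5} with $\ell=t$:
\[
\sum_{n\ge0}\overline{p}_{2t}(2n)q^n=\frac{f_{2t}f_4^5}{f_t^2f_8^2}\cdot\frac{1}{f_1^4},
\qquad
\sum_{n\ge0}\overline{p}_{2t}(2n+1)q^n=2\,\frac{f_{2t}f_2^2f_8^2}{f_t^2f_4}\cdot\frac{1}{f_1^4}.
\]
Since $4\mid t$ (evenness of $t$ suffices), the factors $f_t,f_{2t},f_2,f_4,f_8$ are all power series in $q^2$. Therefore the odd part of each right-hand side comes only from the odd part of $1/f_1^4$. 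By \eqref{1byf14} that odd part is $4q\,f_4^2f_8^4/f_2^{10}$, whose coefficients are integers. Extracting the terms $q^{2n+1}$, replacing $q^2$ by $q$, and using the factor $4$ (respectively $2\cdot4=8$) gives
\[
\sum_{n\ge0}\overline{p}_{2t}(4n+2)q^n=4\,(\cdots),\qquad \sum_{n\ge0}\overline{p}_{2t}(4n+3)q^n=8\,(\cdots).
\]
Here $(\cdots)$ denotes a power series with integer coefficients. This yields \eqref{cor1} and \eqref{cor2} as exact divisibilities.

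For \eqref{cor3}, note that $9n+3j=3(3n+j)$, so we use \eqref{res1}:
\[
\sum_{n\ge0}\overline{p}_{3\ell}(3n)q^n=\frac{f_{2\ell}f_3^6}{f_\ell^2f_6^3}\left(\frac{f_2}{f_1^2}\right)^4.
\]
Because $3\mid\ell$ (guaranteed by $9\mid\ell$), the prefactor is a series in $q^3$. So it suffices to show that the terms $q^{3n+1}$ and $q^{3n+2}$ of $(f_2/f_1^2)^4$ vanish modulo $8$. By \eqref{f2byf12}, write $f_2/f_1^2=A+X$. Here $A=f_6^4f_9^6/(f_3^8f_{18}^3)$ is a series in $q^3$, and $X=2qB+4q^2C$, where $B$ and $C$ are series in $q^3$ with integer coefficients. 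Expand binomially:
\begin{itemize}
\item $4A^3X\equiv0\pmod 8$, since $X\equiv0\pmod2$;
\item $X^2\equiv4q^2B^2\pmod 8$, hence $6A^2X^2\equiv24A^2q^2B^2\equiv0\pmod8$;
\item $X^3\equiv X^4\equiv0\pmod8$.
\end{itemize}
Hence $(f_2/f_1^2)^4\equiv A^4\pmod 8$, which is a series in $q^3$. Extracting the $q^{3n+j}$ terms for $j=1,2$ then gives \eqref{cor3}.

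The only step needing care is this last binomial bookkeeping modulo $8$. One must check that every cross term carries at least three factors of $2$: the $6A^2X^2$ term is the delicate one, and it works because $6\cdot4=24$. One must also confirm that all the $\ell$- and $t$-dependent factors are genuinely series in $q^2$ or $q^3$, so that they do not mix residue classes. The divisibility assumptions $4\mid t$ and $9\mid\ell$ are stronger than what this argument needs.
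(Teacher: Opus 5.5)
Your proof is correct. It shares the paper's skeleton: specialize \eqref{res4}, \eqref{res5}, \eqref{res1}, observe that the $t$- and $\ell$-dependent factors are series in $q^2$ (respectively $q^3$), and extract. The difference is in how you dispose of the remaining powers of $f_1$.

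The paper reduces everything by \eqref{bt}. The congruence $f_1^4\equiv f_2^2\pmod 4$ makes the right side of \eqref{res4} a series in $q^2$ modulo $4$. With the factor $2$, it also makes the right side of \eqref{res5} a series in $q^2$ modulo $8$. For the third congruence, $f_1^8\equiv f_2^4\pmod 8$ gives $(f_2/f_1^2)^4\equiv1\pmod 8$, so \eqref{res1} is congruent to $f_{2\ell}f_3^6/(f_\ell^2f_6^3)$ modulo $8$.

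For the first two congruences you use the exact dissection \eqref{1byf14} instead, and this buys a little more. It yields, for every even $t$, the identities $\sum\overline{p}_{2t}(4n+2)q^n=4f_tf_2^7f_4^2/(f_{t/2}^2f_1^{10})$ and $\sum\overline{p}_{2t}(4n+3)q^n=8f_tf_2f_4^6/(f_{t/2}^2f_1^{8})$. At $t=2$ these specialize to \eqref{p42} and \eqref{p510}.

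For the third congruence, your binomial bookkeeping with \eqref{f2byf12} is valid but a detour. The fact $(f_2/f_1^2)^4\equiv1\pmod 8$ is immediate from \eqref{bt}, and combined with your computation it shows $A^4\equiv1\pmod 8$.

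Both arguments use only $2\mid t$ and $3\mid\ell$, so, as you note, the stated hypotheses $4\mid t$ and $9\mid\ell$ can be weakened.
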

	\begin{proof}
		These congruences follow on specializing $t$ and $\ell$ in \eqref{res4}, \eqref{res5}, and \eqref{res1}, reducing modulo the stated modulus by \eqref{bt}, and extracting the relevant arithmetic progression.
	\end{proof}
	\begin{corollary}\label{c2} For all $n\geq0$ and $\ell\geq1$,
		\begin{align}
			\sum_{n=0}^{\infty}\overline{p}_{2\ell}(2n)q^n\equiv \frac{1}{\varphi(-q^2)\varphi(-q^\ell)}\pmod{4},\label{res6}\\
			\sum_{n=0}^{\infty}\overline{p}_{3\ell}(3n)q^n\equiv
			\frac{\varphi^3(-q^3)}{\varphi(-q^\ell)}\pmod{8},\label{res8}\\
			\sum_{n=0}^{\infty}\overline{p}_{2\ell}(2n+1)q^n\equiv2\,\frac{\psi(q^4)}
			{\varphi(-q^\ell)}\pmod{8}, \label{res7}\\
			\overline{p}_{3\ell}(3n+1)\equiv 0 \pmod{2},\\
			\overline{p}_{3\ell}(3n+2)\equiv0 \pmod{4}.
		\end{align}
	\end{corollary}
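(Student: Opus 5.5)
Everything follows from Theorem~\ref{t1} once the binomial-type congruence \eqref{bt} is applied with $p=2$. The two consequences needed are $f_1^4\equiv f_2^2 \pmod 4$ (that is, $l=2$, $m=1$) and $f_1^8\equiv f_2^4\pmod 8$ (that is, $l=3$, $m=1$). Equivalently, $\varphi^2(-q)=f_1^4/f_2^2\equiv 1\pmod 4$ and $\varphi^4(-q)=f_1^8/f_2^4\equiv 1\pmod 8$, since $f_2$ is a unit power series. These follow directly from $\varphi(-q)=1+2\sum_{n\ge1}(-1)^nq^{n^2}$: writing $\varphi(-q)=1+2X$, we get $\varphi^2(-q)=1+4X+4X^2$ and $\varphi^4(-q)=(1+4X(1+X))^2\equiv 1+8X(1+X)\equiv1 \pmod 8$.

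For \eqref{res8}, start from \eqref{res1}, which gives $\sum\overline{p}_{3\ell}(3n)q^n=\varphi^3(-q^3)/(\varphi(-q^\ell)\varphi^4(-q))$. Replacing $\varphi^4(-q)$ by $1$ modulo $8$ gives \eqref{res8} at once. For \eqref{res6}, use the first form of \eqref{res4}:
\[
\frac{f_{2\ell}f_4^5}{f_\ell^2f_1^4f_8^2}=\frac{1}{\varphi(-q^\ell)}\cdot\frac{f_4^5}{f_1^4f_8^2}.
\]
Modulo $4$, replace $f_1^4$ by $f_2^2$, so the factor becomes $f_4^5/(f_2^2f_8^2)$. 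This should equal $1/\varphi(-q^2)$ modulo $4$. Indeed, $\varphi(-q^2)=f_2^2/f_4$, and
\[
\frac{f_4^5}{f_2^2f_8^2}\cdot\frac{f_2^2}{f_4}=\frac{f_4^4}{f_8^2}=\varphi^2(-q^4)\equiv1\pmod 4 .
\]
For \eqref{res7}, \eqref{res5} gives $2\psi(q^4)/(\varphi(-q^\ell)\varphi^2(-q))$. Because of the factor $2$ in front, it is enough to have $\varphi^2(-q)\equiv1\pmod 4$, and this yields the congruence modulo $8$. The one point to watch is that the paper's displayed equality in \eqref{res5} omits the factor $2$ in the $\varphi,\psi$ form. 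I would use the eta-quotient form, where $2f_2^2f_8^2/(f_1^4f_4)=2\psi(q^4)\,f_2^2/f_1^4=2\psi(q^4)/\varphi^2(-q)$, since $\psi(q^4)=f_8^2/f_4$.

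The last two statements come straight from \eqref{res2} and \eqref{res3}. The generating functions there are $2$ and $4$ times power series with integer coefficients, because every $f_k$ is invertible in $\mathbb{Z}[[q]]$.

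The main obstacle is only bookkeeping: checking the eta-quotient identities $\varphi(-q)=f_1^2/f_2$, $\psi(q)=f_2^2/f_1$ and $\varphi(-q^2)=f_2^2/f_4$ against the exponents in \eqref{res4} and \eqref{res5}. In particular, the leftover factor in the $\pmod 4$ reduction of \eqref{res6} must be exactly a power of $\varphi$ that is congruent to $1$.
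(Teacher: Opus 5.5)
Your proof is correct and is essentially the paper's argument: each congruence comes from reducing the dissections of Theorem~\ref{t1} via \eqref{bt}, i.e.\ $f_1^4\equiv f_2^2 \pmod 4$ and $f_1^8\equiv f_2^4\pmod 8$ (equivalently $\varphi^2(-q)\equiv 1\pmod 4$ and $\varphi^4(-q)\equiv 1\pmod 8$), together with the identity $\varphi(q^2)\varphi(-q^2)=\varphi^2(-q^4)\equiv 1\pmod 4$ needed for \eqref{res6}. You are also right to work from the eta-quotient form of \eqref{res5}, since the $\varphi,\psi$ expression displayed there drops the factor $2$.
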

	\begin{proof}
		Each congruence follows on reducing the dissections of Theorem~\ref{t1} modulo the stated modulus by \eqref{bt}.
	\end{proof}
	\begin{theorem} For all $n\geq0$, $\alpha>1$
		\begin{align}
			\overline{p}_{3\ell}(48n+6j+1)&\equiv0 \pmod{4},\,\, \text{for}\,\,j\in\{1,2,3,5,6,7\},\label{mares1}\\
			\overline{p}_{3\ell}(3n+1)&\equiv\overline{p}_{3\ell}(4^\alpha(3n+1))\pmod{4},\label{mares2}\\
			\overline{p}_{3\ell}(4^{\alpha+1}(6n+5))&\equiv0\pmod{4},\label{mares31}\\
			\overline{p}_{3\ell}(4^\alpha(6n+5))&\equiv0\pmod{8},\label{mares3}\\
			\overline{p}_{3\ell}(3n+2)&\equiv\overline{p}_{3\ell}(4^{\alpha}(3n+2))\pmod{8}.\label{mares4}
		\end{align}
	\end{theorem}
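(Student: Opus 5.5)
The plan is to reduce everything to a single mod-$2$ identity for the generating function \eqref{res2}, and to two-dissect it with \eqref{f33byf1}. Write $P(n)=\overline{p}_{3\ell}(n)$. By \eqref{bt} with $p=2$ we have $f_{2\ell}\equiv f_\ell^2$, $f_1^2\equiv f_2$ and $f_1^8\equiv f_8$, $f_2^4\equiv f_8 \pmod 2$. Hence
\[
\frac{f_{2\ell}f_2^3f_3^3}{f_\ell^2f_1^7}\equiv \frac{f_1f_2^3f_3^3}{f_8}\equiv \frac{f_2^4f_3^3}{f_1f_8}\equiv\frac{f_3^3}{f_1}\pmod 2 .
\]
Inserting this into \eqref{res2} and then using \eqref{f33byf1} gives
\[
\sum_{n\ge0}P(3n+1)q^n\equiv 2\,\frac{f_3^3}{f_1}= 2\frac{f_4^3f_6^2}{f_2^2f_{12}}+2q\frac{f_{12}^3}{f_4}\pmod 4 .
\]

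We extract the even and odd parts of this congruence. The even part gives $\sum P(6n+1)q^n\equiv 2f_2^3f_3^2/(f_1^2f_6)\equiv 2f_2^2\equiv 2f_4\pmod 4$. The odd part gives $\sum P(6n+4)q^n\equiv 2f_6^3/f_2\pmod 4$, which is exactly $2\,(f_3^3/f_1)$ with $q\mapsto q^2$.

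From the even part, since $f_4$ is a series in $q^4$, we get $P(6n+1)\equiv0\pmod 4$ whenever $n\not\equiv0\pmod4$. That is, $P(24n+7)\equiv P(24n+13)\equiv P(24n+19)\equiv0\pmod 4$. Reducing $48n+6j+1$ modulo $24$ for $j\in\{1,2,3,5,6,7\}$ lands in the residues $7,13,19$, which proves \eqref{mares1}.

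From the odd part, the coefficients of odd powers vanish, so $P(12n+10)\equiv0\pmod4$. The even powers give $P(12n+4)\equiv P(3n+1)\pmod 4$. Since $4(3n+1)=12n+4$, induction on $\alpha$ yields \eqref{mares2}.

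For the mod-$8$ statements we use \eqref{res3}. It suffices to know $f_{2\ell}f_2^2f_6^3/(f_\ell^2f_1^6)$ modulo $2$, and this is $\equiv f_6^3/f_2\pmod 2$, a series in $q^2$. Consequently $P(6n+5)\equiv0\pmod 8$. Moreover
\[
\sum P(6n+2)q^n\equiv 4\frac{f_3^3}{f_1}\pmod 8 .
\]
Comparing with the mod-$4$ congruence above gives the key transfer relation $P(6n+2)\equiv 2P(3n+1)\pmod 8$.

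Now \eqref{mares4} with $\alpha=1$ is checked by parity of $n$.
\begin{itemize}
\item If $n$ is odd, then $P(3n+2)\equiv0$, and $12n+8=6(4m+3)+2$. So $P(12n+8)\equiv2P(12m+10)\equiv0\pmod 8$.
\item If $n=2m$, then $P(6m+2)\equiv 2P(3m+1)$. Also $P(24m+8)=P(6(4m+1)+2)\equiv 2P(12m+4)\equiv2P(3m+1)\pmod 8$, using the mod-$4$ relation $P(12m+4)\equiv P(3m+1)$.
\end{itemize}
Induction on $\alpha$ then gives \eqref{mares4}.

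Since $4^\alpha(6n+5)\equiv 2\pmod 3$, applying \eqref{mares4} reduces $P(4^\alpha(6n+5))$ to $P(6n+5)\equiv0\pmod 8$, which proves \eqref{mares3}. The weaker statement \eqref{mares31} follows immediately.

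The main obstacle is the bookkeeping in the mod-$8$ step. There we must match residues carefully: $4(3n+2)$ has to be rewritten in the form $6k+2$ so that the transfer relation $P(6k+2)\equiv2P(3k+1)$ can be combined with the mod-$4$ internal congruence. Everything else is a direct application of \eqref{bt} and \eqref{f33byf1}.
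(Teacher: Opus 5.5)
Your proof is correct and follows the paper's route: you reduce \eqref{res2} and \eqref{res3} modulo $2$ via \eqref{bt}, then $2$-dissect $f_3^3/f_1$ with \eqref{f33byf1}, which yields the mod-$4$ results and $\overline{p}_{3\ell}(6n+5)\equiv0\pmod 8$. The only difference is in the mod-$8$ step. For \eqref{mares4} the paper applies \eqref{f33byf1} once more to $4f_3^3/f_1$ and gets $\sum\overline{p}_{3\ell}(12n+8)q^n\equiv4f_6^3/f_2\pmod 8$ directly, whereas you go through $\overline{p}_{3\ell}(6n+2)\equiv2\,\overline{p}_{3\ell}(3n+1)\pmod 8$ and a parity split; you also spell out the deduction of \eqref{mares3} and \eqref{mares31} from \eqref{mares4}, which the paper leaves implicit.
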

	\begin{proof}
		Thanks to \eqref{bt}, \eqref{res2} takes the form
		\begin{align}\label{t21}
			\sum_{n=0}^{\infty}\overline{p}_{3\ell}(3n+1)q^n\equiv
			2\,\frac{f_{3}^3}{f_1}\pmod{4}.
		\end{align}
		Invoking \eqref{f33byf1} in the above equation and extracting the terms containing $q^{2n}$ and $q^{2n+1}$ from both sides of the resulting equation, we obtain
		\begin{align}\label{t22}
			\sum_{n=0}^{\infty}\overline{p}_{3\ell}(6n+1)q^n\equiv
			2\,f_4\pmod{4}
		\end{align} and 
		\begin{align}\label{t23}
			\sum_{n=0}^{\infty}\overline{p}_{3\ell}(6n+4)q^n\equiv
			2\,\frac{f_{6}^3}{f_2}\pmod{4}.
		\end{align}
		Congruence \eqref{mares1} follows from the above equation \eqref{t22}, since by the pentagonal number theorem $f_4$ is supported only on exponents $4\cdot k(3k-1)/2$, which are congruent to $0$ or $4$ modulo $8$; hence the coefficient of $q^n$ on the right of \eqref{t22} vanishes modulo $4$ whenever $n\not\equiv0,4\pmod8$, that is, for $n=8m+j$ with $j\in\{1,2,3,5,6,7\}$. Equation \eqref{t23} implies that
		\begin{align}\label{t24}
			\sum_{n=0}^{\infty}\overline{p}_{3\ell}(12n+4)q^n\equiv
			2\,\frac{f_{3}^3}{f_1}\pmod{4}.
		\end{align}
		Congruence \eqref{mares2} follows from \eqref{t21} and \eqref{t24}. Thanks to \eqref{bt}, the equation \eqref{res3} takes the form
		\begin{align}\label{t25}
			\sum_{n=0}^{\infty}\overline{p}_{3\ell}(3n+2)q^n\equiv
			4\,\frac{f_{6}^3}{f_2}\pmod{8}
		\end{align}
		which implies
		\begin{align}\label{t26}
			\sum_{n=0}^{\infty}\overline{p}_{3\ell}(6n+2)q^n\equiv
			4\,\frac{f_3^3}{f_{1}}\pmod{8}
		\end{align}
		and
		\begin{align}\label{t27}
			\overline{p}_{3\ell}(6n+5)\equiv0\pmod{8}.
		\end{align}
		Invoking \eqref{bt} in the equation \eqref{t26}, and extracting the terms involving $q^{2n+1}$ from both sides of the resulting equation, we get
		\begin{align}\label{t28}
			\sum_{n=0}^{\infty}\overline{p}_{3\ell}(12n+8)q^n\equiv
			4\,\frac{f_{6}^3}{f_2}\pmod{8}.
		\end{align}
		Congruence \eqref{mares4} follows from \eqref{t25} and above equation \eqref{t28}. This completes the proof.
	\end{proof}
	
	The dissection of $f_1$ recorded in Lemma~\ref{a1} restricts the support of $f_1$
	to a single residue class modulo $p$ in each of its $p$ pieces, since for
	$k\neq\frac{\pm p-1}{6}$ the two theta-arguments
	$\frac{3p^2+(6k+1)p}{2}$ and $\frac{3p^2-(6k+1)p}{2}$ are themselves multiples of
	$p$; the leading exponent of the $k$-th piece is therefore the residue class it
	occupies. The same fact follows directly from Euler's pentagonal number theorem:
	$f_1$ is supported exactly on the pentagonal numbers $k(3k-1)/2$, $k\in\mathbb Z$,
	and $24m+1$ is a perfect square whenever $m$ is pentagonal. Consequently, for a
	prime $p\ge5$ and $0\le b<p$, if $24b+1$ is a quadratic non-residue modulo $p$
	then no integer $m\equiv b\pmod p$ is pentagonal, and the coefficient of $q^m$ in
	$f_1$ vanishes identically for every such $m$.
	
	\begin{theorem}\label{thpdiss1}
		Let $p\ge5$ be prime and let $b$, $0\le b<p$, satisfy
		$\left(\dfrac{24b+1}{p}\right)=-1$. Then for every $n\ge0$,
		\begin{equation}
			\overline{p}_{3\ell}(24pn+24b+1)\equiv0\pmod4.
			\label{pdiss3l}
		\end{equation}
	\end{theorem}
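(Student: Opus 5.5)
The plan is to reduce the statement to the congruence \eqref{t22}, which was obtained in the proof of the preceding theorem:
\[
\sum_{n=0}^{\infty}\overline{p}_{3\ell}(6n+1)q^n\equiv 2\,f_4\pmod{4}.
\]
This congruence holds for every $\ell\ge1$. After that, everything rests on the support of $f_1$ on the pentagonal numbers, as in the remark preceding Theorem~\ref{thpdiss1}.

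First I locate the argument of the theorem inside the progression $6m+1$. Since $24pn+24b+1=6m+1$ with $m=4(pn+b)$, the number $\overline{p}_{3\ell}(24pn+24b+1)$ is, modulo $4$, congruent to the coefficient of $q^{4(pn+b)}$ in $2f_4$. Replacing $q^4$ by $q$, this equals twice the coefficient of $q^{N}$ in $f_1$, where $N=pn+b$. It therefore suffices to show that this coefficient of $f_1$ is $0$; its parity would already be enough.

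Next I invoke Euler's pentagonal number theorem, $f_1=\sum_{k\in\mathbb Z}(-1)^kq^{k(3k-1)/2}$. The coefficient of $q^N$ is nonzero only if $N=k(3k-1)/2$ for some integer $k$. In that case
\[
24N+1=36k^2-12k+1=(6k-1)^2
\]
is a perfect square. For our $N$ we have $24N+1=24pn+24b+1\equiv 24b+1\pmod p$. Since $p\ge5$ does not divide $24$ and $24b+1$ is a non-residue, $p\nmid 24b+1$. A perfect square congruent to a quadratic non-residue modulo $p$ is impossible, so $N$ is not pentagonal. Hence the coefficient of $q^N$ in $f_1$ vanishes, and $\overline{p}_{3\ell}(24pn+24b+1)\equiv 2\cdot 0=0\pmod 4$.

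There is no serious obstacle. The only point needing care is the index bookkeeping that turns $24pn+24b+1$ into the exponent $4(pn+b)$ of $f_4$, together with the fact that \eqref{t22} is uniform in $\ell$. Alternatively, Lemma~\ref{a1} could be used to show that the $b$-th residue class modulo $p$ receives no contribution, but the direct pentagonal-number argument is shorter, and I would use it.
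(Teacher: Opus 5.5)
Your proposal is correct and is essentially the paper's own proof. Both read off $\overline{p}_{3\ell}(24pn+24b+1)$ from \eqref{t22} as twice the coefficient of $q^{4(pn+b)}$ in $f_4$, that is, of $q^{pn+b}$ in $f_1$, and show it vanishes because a pentagonal $N=pn+b$ would make $24N+1\equiv 24b+1\pmod p$ a square (your remark that $p\nmid 24$ is not needed, since $\bigl(\frac{24b+1}{p}\bigr)=-1$ already forces $p\nmid 24b+1$).
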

	\begin{proof}
		Since $f_4=f_1(q^4)$, the coefficient of $q^m$ in $f_4$ vanishes unless $4\mid m$,
		in which case it equals the coefficient of $q^{m/4}$ in $f_1$. By the discussion
		above, this coefficient is identically zero whenever $m/4\equiv b\pmod p$ with
		$24b+1$ a quadratic non-residue mod $p$, that is, whenever
		$m\equiv4b\pmod{4p}$. Equation \eqref{t22} gives
		$\overline{p}_{3\ell}(6n+1)\equiv2f_4\pmod4$, so taking $n=4(pn'+b)$ in $f_4$'s
		expansion forces $\overline{p}_{3\ell}\bigl(6\cdot4(pn'+b)+1\bigr)
		=\overline{p}_{3\ell}(24pn'+24b+1)\equiv0\pmod4$ for every $n'\ge0$.
	\end{proof}
	For example, $p=5$ gives the non-residues $b=3,4$ (since $73\equiv3$ and
	$97\equiv2\pmod5$ are not squares mod $5$), so
	\[
	\overline{p}_{3\ell}(120n+73)\equiv0\pmod4,\qquad
	\overline{p}_{3\ell}(120n+97)\equiv0\pmod4
	\]
	for every $\ell\ge1$ and every $n\ge0$.
	
	\section{Congruences for $\overline{p}_4(n)$, $\overline{p}_6(n)$, $\overline{p}_8(n)$, and $\overline{p}_9(n)$}
	\subsection{Congruences for $\overline{p}_{4}(n)$}
	In this section, we study the arithmetic properties of $\overline{p}_{4}(n)$.
	\begin{theorem}For all $n\geq0$, $\alpha\geq1$
		\begin{align}
			\overline{p}_{4}(2n)&\equiv\overline{p}_{4}(2^{\alpha}(2n))  \pmod{4},\\
			\overline{p}_{4}(16n+10)&\equiv 0 \pmod{8},\\
			\overline{p}_{4}(4^{\alpha}(8n+6))&\equiv 0 \pmod{16},\\
			\overline{p}_{4}(16n+14)&\equiv 0 \pmod{32},\\
			\overline{p}_{4}(2^{\alpha}(16n+12))&\equiv 0 \pmod{32},\\
			\overline{p}_{4}(16n+12)&\equiv 0 \pmod{256},\\
			\overline{p}_{4}(32n+28)&\equiv 0 \pmod{512}.
		\end{align}
	\end{theorem}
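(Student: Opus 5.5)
The plan is to specialize Theorem~\ref{t1} to $\ell=2$ and then iterate $2$-dissections, using the $2$-dissection \eqref{1byf14} of $1/f_1^4$ at every stage. Any power $1/f_1^{4k}$ will be handled as $(1/f_1^4)^k$ and expanded binomially. The key point is that each odd-indexed piece of \eqref{1byf14} carries an explicit factor $4q$, so the $2$-adic valuation grows in a controllable way at each extraction.

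\emph{Step 1 (first split).} With $\ell=2$, \eqref{res4} reads
$$\sum_{n\ge0}\overline{p}_4(2n)q^n=\frac{f_4^6}{f_1^4f_2^2f_8^2}.$$
Substituting \eqref{1byf14} and extracting even and odd parts gives
\begin{align*}
\sum_{n\ge0}\overline{p}_4(4n)q^n&=\frac{f_2^{20}}{f_1^{16}f_4^6},\\
\sum_{n\ge0}\overline{p}_4(4n+2)q^n&=4\,\frac{f_2^8f_4^2}{f_1^{12}}.
\end{align*}

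\emph{Step 2 (the modulus-$4$ claim).} Reducing the generating function of $\overline{p}_4(2n)$ modulo $4$ with \eqref{bt} ($f_1^4\equiv f_2^2$, $f_2^4\equiv f_4^2$, $f_4^4\equiv f_8^2$) collapses it to $1$. Hence $\overline{p}_4(2n)\equiv0\pmod 4$ for all $n\ge1$, and $\overline{p}_4(0)=1$. Since $2^\alpha\cdot 2n=2(2^{\alpha-1}\cdot 2n)$ is again an even index, and is positive exactly when $n\ge1$, both sides of the first congruence agree modulo $4$ (including the case $n=0$, where both equal $1$).

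\emph{Step 3 (the $4n+2$ branch).} Reducing $4f_2^8f_4^2/f_1^{12}$ modulo $8$ with $f_1^{12}\equiv f_2^6\pmod 2$ gives $4f_2^2f_4^2\equiv 4f_4^3\pmod 8$. This series is supported on exponents divisible by $4$, so $\overline{p}_4(4n+2)\equiv0\pmod 8$ whenever $4\nmid n$; this contains $\overline{p}_4(16n+10)\equiv0\pmod 8$. For the moduli $16$ and $32$ I will not reduce so early. Instead I write $1/f_1^{12}=(1/f_1^4)^3$, expand via \eqref{1byf14}, and extract even and odd parts. Each odd part carries a factor $4q$ or $(4q)^3$, so $\sum\overline{p}_4(8n+6)q^n$ is $16$ times an eta quotient. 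One further dissection, modulo $32$, then gives $\overline{p}_4(16n+14)\equiv0\pmod{32}$.

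For the factor $4^\alpha$ in $\overline{p}_4(4^\alpha(8n+6))$, I will show that the generating function of $\overline{p}_4(4n)$, after two halvings, reproduces modulo $16$ the shape of the $4n+2$ generating function. This yields a recursion of the form $\overline{p}_4(4m)\equiv c\,\overline{p}_4(m)$ on the relevant progression, analogous to how \eqref{mares2} and \eqref{mares4} were obtained, and induction on $\alpha$ finishes that claim. The congruence $\overline{p}_4(2^\alpha(16n+12))\equiv0\pmod{32}$ will be handled the same way, via a modulo-$32$ self-similarity between the even and odd extractions.

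\emph{Step 4 (the $4n$ branch: moduli $256$ and $512$).} Starting from $f_2^{20}/(f_1^{16}f_4^6)$, I expand $(1/f_1^4)^4$ with \eqref{1byf14}. The odd part consists of the terms with odd powers of $4q$, so
$$\sum_{n\ge0}\overline{p}_4(8n+4)q^n=16A+256B$$
for explicit eta quotients $A,B$. Dissecting once more, I extract the odd part to get $\overline{p}_4(16n+12)$. I expect $A$ to again contain $1/f_1^{4k}$, so its odd part contributes another factor at least $4\cdot 4$; the goal is to show the total is divisible by $256$. A final odd-part extraction then gives $\overline{p}_4(32n+28)$ modulo $512$.

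The main obstacle is this last bookkeeping. Each dissection produces several cross terms, and each one must be checked to gain enough powers of $2$, using \eqref{bt} only at the end. If a cross term falls short, I will try rewriting it with \eqref{f14} or \eqref{1byf12} so that its odd part exhibits an additional factor of $2$.
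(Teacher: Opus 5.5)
\textbf{Where the proposal stands.} Several parts of your plan are sound:
\begin{itemize}
\item Steps 1 and 2 are correct. The collapse $\sum\overline{p}_4(2n)q^n\equiv1\pmod4$ settles the first congruence outright; the paper's proof does not address it explicitly.
\item The $16n+10$ and $16n+14$ parts of Step 3 are correct and follow the paper's route through \eqref{p42}.
\item Step 4 follows the paper's route through \eqref{p44}. The paper avoids the bookkeeping by reducing \eqref{p44} modulo $512$ with \eqref{bt} before extracting, as in \eqref{p45}. If you expand instead, you need every odd-index coefficient $\binom{8}{j}4^j$ of $(1/f_1^4)^8$ to be divisible by $32$, not just $4\cdot4$, to reach $512$.
\end{itemize}

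\textbf{The gap is in the two $\alpha$-families.} Your description (the $4n$ generating function after two halvings matching the $4n+2$ one modulo $16$) yields
\[
\sum\overline{p}_4(16n+8)q^n\equiv\sum\overline{p}_4(4n+2)q^n\pmod{16}.
\]
This is true, by \eqref{p43i4} and \eqref{bt}. It says $\overline{p}_4(4m)\equiv\overline{p}_4(m)$ for $m\equiv2\pmod4$. Unlike the classes $3n+1$ and $3n+2$ behind \eqref{mares2} and \eqref{mares4}, this class is not stable under $m\mapsto4m$. It applies at $m=4^\alpha(8n+6)$ only for $\alpha=0$, since for $\alpha\ge1$ that index is a multiple of $8$. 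So your induction reaches $\alpha=1$ and stops. Likewise, the ``self-similarity between the even and odd extractions'' you propose for $\overline{p}_4(2^\alpha(16n+12))$ is never identified.

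\textbf{What is needed} is a self-similarity on the multiples of $16$, between two consecutive even extractions:
\begin{itemize}
\item In \eqref{p43i2}, write $f_1^{20}=(f_1^4)^5$ and apply \eqref{f14}. Modulo $32$, only the fifth power of the first term of \eqref{f14} survives in the even part; the other even terms carry $160$ and $1280$.
\item This gives $\sum\overline{p}_4(32n)q^n\equiv f_2^{46}/(f_1^{12}f_4^{20})\pmod{32}$.
\item By \eqref{bt}, $f_1^{32}\equiv f_2^{16}$ and $f_2^{32}\equiv f_4^{16}\pmod{32}$, so this is again $f_1^{20}/(f_2^2f_4^4)$.
\item Hence $\overline{p}_4(2^j\cdot16n)\equiv\overline{p}_4(16n)\pmod{32}$ for all $j\ge0$.
\end{itemize}
Combine this with the base cases $\overline{p}_4(32n+24)\equiv0$ and $\overline{p}_4(64n+48)\equiv0\pmod{32}$ from \eqref{p43i510} and \eqref{p43i31}. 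Together they give $\overline{p}_4(2^\alpha(16n+12))\equiv0\pmod{32}$ for every $\alpha\ge1$. The $4^\alpha(8n+6)$ family is the special case $4^\alpha(8n+6)=2^{2\alpha-1}(16n+12)$, so it holds even modulo $32$. The paper's proof itself records only these base cases and leaves the passage to general $\alpha$ implicit, so you have to supply this step either way.
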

	\begin{proof}
		Substituting $\ell=2$ into \eqref{res4} and \eqref{res5}, and invoking \eqref{1byf14} in the resulting equations, we get
		\begin{align}
			\sum_{n=0}^{\infty}\overline{p}_{4}(4n)q^n=\frac{f_2^{20}}{f_1^{16}f_4^6},\label{p41}\\
			\sum_{n=0}^{\infty}\overline{p}_{4}(4n+1)q^n= 2\frac{f_2^{14}}{f_1^{14}f_4^2},\label{p50}\\
			\sum_{n=0}^{\infty}\overline{p}_{4}(4n+2)q^n=4\frac{f_2^{8}f_4^2}{f_1^{12}},\label{p42}\\
			\sum_{n=0}^{\infty}\overline{p}_{4}(4n+3)q^n=8\,\frac{f_2^2\,f_4^{6}}{f_1^{10}}.\label{p510}
		\end{align}
		Note that equations \eqref{p41}--\eqref{p510} provide the components of the 4-dissection of the generating function for $\overline{p}_{4}(n)$. Now, invoking \eqref{1byf14} in \eqref{p41} and extracting the terms containing $q^{2n}$ and $q^{2n+1}$ from both sides of the resulting equation, we get
		\begin{align}
			\sum_{n=0}^{\infty}\overline{p}_{4}(8n)q^n=\frac{f_2^{50}}{f_1^{36}f_{4}^{16}} +  96q\frac{f_2^{26}}{f_1^{28}}+256q^2\frac{f_2^{2} f_{4}^{16}}{f_1^{20}}\label{p43}
		\end{align}
		and
		\begin{align}
			\sum_{n=0}^{\infty}\overline{p}_{4}(8n+4)q^n=16\frac{f_2^{38}}{f_1^{32} f_{4}^{8}} +  256q\frac{f_2^{14} f_{4}^{8}}{f_1^{24}}.\label{p44}
		\end{align}
		Thanks to \eqref{bt}, the equation \eqref{p43} reduces to
		\begin{align}
			\sum_{n=0}^{\infty}\overline{p}_{4}(8n)q^n\equiv\frac{f_2^{34}}{f_1^{4}f_{4}^{16}}\pmod{32}.\label{p43i1}
		\end{align}
		Invoking \eqref{1byf14} in the above equation and extracting the terms involving $q^{2n}$ and $q^{2n+1}$ from both sides of the resulting equation, we get
		\begin{align}
			\sum_{n=0}^{\infty}\overline{p}_{4}(16n)q^n\equiv\frac{f_1^{20}}{f_2^{2}f_{4}^{4}}\pmod{32}\label{p43i2}
		\end{align}
		and
		\begin{align}
			\sum_{n=0}^{\infty}\overline{p}_{4}(16n+8)q^n\equiv4\frac{f_1^{24}f_{4}^{4}}{f_2^{14}}\equiv4\frac{f_{4}^{4}}{f_2^{2}}\pmod{32}.\label{p43i4}
		\end{align}
		Extracting the terms involving $q^{2n}$ and $q^{2n+1}$ respectively from the equation \eqref{p43i4}, we get
		\begin{align}
			\sum_{n=0}^{\infty}\overline{p}_{4}(32n+8)q^n\equiv4\frac{f_{2}^{4}}{f_1^{2}}\pmod{32}\label{p43i5}
		\end{align}
		and 
		\begin{align}
			\overline{p}_{4}(32n+24)\equiv0\pmod{32}.\label{p43i510}
		\end{align}
		Invoking \eqref{1byf12} in the equation \eqref{p43i5} and extracting the terms containing $q^{2n+1}$ in the resulting equation, we have
		\begin{align}
			\sum_{n=0}^{\infty}\overline{p}_{4}(64n+40)q^n\equiv8\psi(q)\psi(q^4)\pmod{32}.\label{p43i6}
		\end{align}
		Substituting \eqref{f14} in the equation \eqref{p43i2} and extracting the terms involving $q^{2n+1}$ from both sides of the resulting equation, we get
		\begin{align}
			\sum_{n=0}^{\infty}\overline{p}_{4}(32n+16)q^n\equiv12\frac{f_2^{34}}{f_1^{8}f_{4}^{12}}\equiv12\frac{f_2^{30}}{f_{4}^{12}} \pmod{32}\label{p43i3}
		\end{align}
		which implies 
		\begin{align}
			\overline{p}_{4}(64n+48)\equiv0 \pmod{32}.\label{p43i31}
		\end{align}
		Thanks to \eqref{bt}, the equation \eqref{p44} takes the form
		\begin{align}
			\sum_{n=0}^{\infty}\overline{p}_{4}(8n+4)q^n\equiv16\frac{f_2^{22}}{f_{4}^{8}}+256qf_2^2f_4^8\pmod{512}\label{p45}
		\end{align}
		which implies
		\begin{align}
			\sum_{n=0}^{\infty}\overline{p}_{4}(16n+4)q^n\equiv16\frac{f_1^{22}}{f_{2}^{8}}\equiv16\frac{f_2^4}{f_1^2}\pmod{128}\label{p46}
		\end{align}
		and
		\begin{align}
			\sum_{n=0}^{\infty}\overline{p}_{4}(16n+12)q^n\equiv256f_1^2f_2^8\equiv 256f_2f_4^4\pmod{512}.\label{p47}
		\end{align}
		Invoking \eqref{1byf12} in the equation \eqref{p46} and extracting the terms involving $q^{2n}$ and $q^{2n+1}$ from the resulting equation, we get
		\begin{align}
			\sum_{n=0}^{\infty}\overline{p}_{4}(32n+4)q^n\equiv16\frac{f_4^{5}}{f_{1}f_{8}^2}\equiv16\varphi(q^2)\psi(q)\pmod{128}\label{p48}
		\end{align}
		and
		\begin{align}
			\sum_{n=0}^{\infty}\overline{p}_{4}(32n+20)q^n\equiv32\psi(q)\psi(q^4)\pmod{128}.\label{p49}
		\end{align}
		Invoking \eqref{1byf14} in \eqref{p42} and extracting the terms containing $q^{2n}$ and $q^{2n+1}$ from both sides of the resulting equation, we get
		\begin{align}
			\sum_{n=0}^{\infty}\overline{p}_{4}(8n+2)q^n=4\frac{f_2^{44}}{f_1^{34}f_4^{12}}+
			192q\,\frac{f_2^{20}f_4^{4}}{f_1^{26}}\label{p431}
		\end{align}
		and
		\begin{align}
			\sum_{n=0}^{\infty}\overline{p}_{4}(8n+6)q^n=48\,\frac{f_2^{32}}{f_1^{30}f_4^{4}}+
			256q\,\frac{f_2^{8}f_4^{12}}{f_1^{22}}.\label{p432}
		\end{align}
		Thanks to \eqref{bt}, \eqref{p431} and \eqref{p432} respectively takes the form
		\begin{align}
			\sum_{n=0}^{\infty}\overline{p}_{4}(8n+2)q^n\equiv4f_2^{3}\pmod{8}\label{p433}
		\end{align}
		and 
		\begin{align}
			\sum_{n=0}^{\infty}\overline{p}_{4}(8n+6)q^n\equiv16f_2^{9}\pmod{32}\label{p434}
		\end{align}
		the above equation \eqref{p434} implies 
		\begin{align}
			\overline{p}_{4}(16n+14)\equiv0\pmod{32}\label{p435}
		\end{align}
		and
		\begin{align}
			\sum_{n=0}^{\infty}\overline{p}_{4}(16n+6)q^n\equiv 16f_1^{9}\equiv 16f_1f_8\pmod{32}.\label{p436}
		\end{align}
		
	\end{proof}
	
	The $f_1^3$ dissection of the lemma preceding \eqref{abc1} exhibits the same
	structure as Lemma~\ref{a1}: for each $k$, the inner sum defining $B_k(q^p)$ has
	every exponent $\frac{pn^2+(2k+1)n}{2}$ a multiple of $p$ once shifted by the
	prefactor $q^{(k^2+k)/2}$, so the $k$-th piece is again confined to a single
	residue class modulo $p$. This is equally transparent from Jacobi's identity
	$f_1^3=\sum_{m\ge0}(-1)^m(2m+1)q^{m(m+1)/2}$: the support of $f_1^3$ is exactly
	the triangular numbers $m(m+1)/2$, $m\ge0$, and $8\cdot\frac{m(m+1)}{2}+1=(2m+1)^2$
	is always a perfect square. Hence, for a prime $p\ge5$ and $0\le b<p$, if $8b+1$
	is a quadratic non-residue modulo $p$ then no integer $r\equiv b\pmod p$ is
	triangular, and the coefficient of $q^r$ in $f_1^3$ vanishes identically for
	every such $r$.
	
	\begin{theorem}\label{thpdiss2}
		Let $p\ge5$ be prime and let $b$, $0\le b<p$, satisfy
		$\left(\dfrac{8b+1}{p}\right)=-1$. Then for every $n\ge0$,
		\begin{equation}
			\overline{p}_{4}(16pn+16b+2)\equiv0\pmod8.
			\label{pdiss4}
		\end{equation}
	\end{theorem}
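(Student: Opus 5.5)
We start from congruence \eqref{p433}, which was established inside the proof of the preceding theorem:
\[
\sum_{n=0}^{\infty}\overline{p}_{4}(8n+2)q^n\equiv 4f_2^{3}\pmod 8 .
\]
The right-hand side involves only even powers of $q$. Extracting the terms containing $q^{2n}$ and replacing $q^2$ by $q$ gives
\[
\sum_{n=0}^{\infty}\overline{p}_{4}(16n+2)q^n\equiv 4f_1^{3}\pmod 8 .
\]
The odd-indexed terms also yield $\overline{p}_4(16n+10)\equiv0\pmod 8$, but we do not need that here. So the problem reduces to showing that certain coefficients of $f_1^3$ are even. In fact they vanish outright.

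By Jacobi's identity, $f_1^3=\sum_{m\ge0}(-1)^m(2m+1)q^{m(m+1)/2}$. Hence the coefficient of $q^r$ in $f_1^3$ is zero unless $r$ is triangular. If $r=m(m+1)/2$, then $8r+1=(2m+1)^2$ is a perfect square.

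Take $r=pn+b$ with $\left(\frac{8b+1}{p}\right)=-1$. Then $8r+1\equiv 8b+1\pmod p$. Since $p\nmid 8b+1$, a square congruent to $8b+1$ modulo $p$ would make $8b+1$ a quadratic residue, which contradicts the hypothesis. So no such $r$ is triangular, and the coefficient of $q^{pn+b}$ in $f_1^3$ is $0$. (The lemma at \eqref{abc1} gives the same conclusion through the $p$-dissection, but Jacobi's identity is more direct.)

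It follows that the coefficient of $q^{pn+b}$ in $4f_1^3$ is $0$. Therefore
\[
\overline{p}_4\bigl(16(pn+b)+2\bigr)=\overline{p}_4(16pn+16b+2)\equiv0\pmod 8,
\]
which is \eqref{pdiss4}.

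The only step that needs care is the reduction to \eqref{p433}, that is, checking that \eqref{bt} legitimately collapses \eqref{p431} modulo $8$:
\begin{itemize}
\item The second term $192q\,f_2^{20}f_4^4/f_1^{26}$ vanishes because $8\mid 192$.
\item In the first term, $f_2^{44}/(f_1^{34}f_4^{12})$ has to be $\equiv f_2^3\pmod 2$. This follows from $f_1^2\equiv f_2$ and $f_2^2\equiv f_4\pmod 2$, which give $f_1^{34}\equiv f_2^{17}$ and $f_4^{12}\equiv f_2^{24}$, so the quotient is $f_2^{44-17-24}=f_2^3$.
\end{itemize}
Since the factor $4$ is in front, a congruence modulo $2$ in the first term is all that is required.
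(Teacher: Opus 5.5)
Your proof is correct and follows essentially the same route as the paper. Both start from \eqref{p433}, $\sum\overline{p}_4(8n+2)q^n\equiv4f_2^3\pmod 8$, and use Jacobi's identity with the observation that $8r+1$ is a square for triangular $r$, so non-residue classes modulo $p$ are avoided. The only differences are cosmetic: the paper works with $f_2^3$ directly via $n=2(pn'+b)$ instead of first extracting the even part to reach $f_1^3$, and your explicit check that \eqref{p431} collapses to \eqref{p433} modulo $8$ is a detail the paper leaves implicit.
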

	\begin{proof}
		Since $f_2^3=f_1^3(q^2)$, the coefficient of $q^m$ in $f_2^3$ vanishes unless
		$2\mid m$, in which case it equals the coefficient of $q^{m/2}$ in $f_1^3$. By
		the discussion above, this coefficient is identically zero whenever
		$m/2\equiv b\pmod p$, that is, whenever $m\equiv2b\pmod{2p}$. Equation
		\eqref{p433} gives $\overline{p}_4(8n+2)\equiv4f_2^3\pmod8$, so taking
		$n=2(pn'+b)$ in $f_2^3$'s expansion forces
		$\overline{p}_4\bigl(8\cdot2(pn'+b)+2\bigr)=\overline{p}_4(16pn'+16b+2)\equiv0
		\pmod8$ for every $n'\ge0$.
	\end{proof}
	For example, $p=5$ gives the non-residues $b=2,4$ for the hypothesis
	$\left(\frac{8b+1}{p}\right)=-1$ (the squares modulo $5$ are $\{0,1,4\}$,
	while $17\equiv2$ and $33\equiv3\pmod5$ are not among them); substituting
	these into the conclusion $16pn+16b+2$ with $p=5$ gives
	\[
	\overline{p}_4(80n+34)\equiv0\pmod8,\qquad
	\overline{p}_4(80n+66)\equiv0\pmod8
	\]
	for every $n\ge0$.
	
	\subsection{Congruences for $\overline{p}_{6}(n)$}
	In this section, we establish a few congruences for the partition function  $\overline{p}_{6}(n)$.
	\begin{theorem}\label{th6}For all $n\geq0$, $\alpha\geq1$
		\begin{align}
			\overline{p}_{6}(9n+6)\equiv0\pmod{3},\label{res61}\\
			\overline{p}_{6}(8n+3)\equiv0\pmod{8},\label{res62}\\
			\overline{p}_{6}(8n+5)\equiv0\pmod{8},\label{res63}\\
			\overline{p}_{6}(24n+17)\equiv0\pmod{8},\label{res64}\\
			\overline{p}_{6}(24n+23)\equiv0\pmod{8}.\label{res65}
		\end{align}
	\end{theorem}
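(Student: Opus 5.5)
The plan is to derive all five congruences from the dissections in Theorem~\ref{t1}, reducing modulo the relevant prime power with \eqref{bt} and then applying one further classical dissection where needed. Since $6=3\cdot 2=2\cdot 3$, the congruence \eqref{res61} and the pair \eqref{res64}--\eqref{res65} come from the $3\ell$-formulas with $\ell=2$. The pair \eqref{res62}--\eqref{res63} comes from the $2\ell$-formula \eqref{res5} with $\ell=3$.

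For \eqref{res61}, take $\ell=2$ in \eqref{res1}:
\[
\sum_{n\ge0}\overline{p}_6(3n)q^n=\frac{f_2^2f_4f_3^6}{f_1^8f_6^3}.
\]
Modulo $3$, \eqref{bt} gives $f_1^9\equiv f_3^3$ and $f_2^3\equiv f_6$, so $1/f_1^8\equiv f_1/f_3^3$ and $f_2^2\equiv f_6/f_2$. The right side therefore becomes
\[
\frac{f_3^3}{f_6^2}\cdot\frac{f_1f_4}{f_2}=\frac{f_3^3}{f_6^2}\cdot\frac{f_1^2}{f_2}\cdot\frac{f_4}{f_1}.
\]
The first factor is a series in $q^3$. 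For the other two I will use the 3-dissections \eqref{f12byf2} and \eqref{f4byf1}, multiply them, and collect the $q^{3n+2}$ component. That component is
\[
2q^2\,\frac{f_9^2}{f_{18}}\cdot\frac{f_6f_{18}f_{36}}{f_3^3}-2q\,\frac{f_3f_{18}^2}{f_6f_9}\cdot q\,\frac{f_6^2f_9^3f_{36}}{f_3^4f_{18}^2},
\]
and both products equal $2q^2f_6f_9^2f_{36}/f_3^3$, so they cancel exactly. Hence $\overline{p}_6(9n+6)\equiv0\pmod 3$. This cancellation is the step I expect to be the delicate one: the argument depends on it being exact, so I would check the exponents term by term.

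For \eqref{res62}--\eqref{res63}, take $\ell=3$ in \eqref{res5}:
\[
\sum_{n\ge0}\overline{p}_6(2n+1)q^n=2\,\frac{f_6f_2^2f_8^2}{f_3^2f_1^4f_4}.
\]
Modulo $8$ the prefactor $2$ lets me reduce the quotient modulo $4$, where $f_1^4\equiv f_2^2$. This gives
\[
\sum_{n\ge0}\overline{p}_6(2n+1)q^n\equiv 2\,\frac{f_6f_8^2}{f_3^2f_4}\pmod 8.
\]
Next I insert \eqref{1byf12} with $q\to q^3$ and split into even and odd powers of $q$.

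\emph{Even part.} It gives $\sum_n\overline{p}_6(4n+1)q^n\equiv 2f_4^2f_{12}^5/(f_2f_3^4f_{24}^2)\pmod 8$. Using $f_3^4\equiv f_6^2\pmod 4$, this becomes a series in $q^2$. So its odd coefficients vanish, which yields $\overline{p}_6(8n+5)\equiv0\pmod 8$.

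\emph{Odd part.} It carries the factor $4q^3$ and gives $\sum_n\overline{p}_6(4n+3)q^n\equiv 4q\,f_2^2f_6^2f_{24}^2/(f_1f_3^4f_{12})\pmod 8$, where $1/f_3^4\equiv1/f_6^2\pmod 2$ removes the odd-index factors coming from \eqref{1byf12} itself. The one remaining odd-index factor is $1/f_1$, and it is not a series in $q^2$ modulo $2$ on its own, so the parity claim is not yet proved. I still need to handle $1/f_1$ before concluding that the right side is $q$ times a series in $q^2$, which would give $\overline{p}_6(8n+3)\equiv0\pmod8$. This is the weak point of the plan. My first attempt would be to apply a further 2-dissection, for instance \eqref{f33byf1}, to the full residual factor and check that its even part vanishes modulo $2$. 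Failing that, I would go back to the form before reducing modulo $4$ and dissect $f_6/(f_3^2f_1^4)$ directly using \eqref{1byf14}.

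For \eqref{res64}--\eqref{res65}, take $\ell=2$ in \eqref{res3}:
\[
\sum_{n\ge0}\overline{p}_6(3n+2)q^n=4\,\frac{f_4f_6^3}{f_1^6}\equiv 4\,\frac{f_4f_6^3}{f_2^3}\pmod 8,
\]
using $f_1^6\equiv f_2^3\pmod 2$. The right side is a series in $q^2$, so $\overline{p}_6(6n+5)\equiv0\pmod 8$. Both $24n+17$ and $24n+23$ are congruent to $5$ modulo $6$, which gives \eqref{res64} and \eqref{res65}.
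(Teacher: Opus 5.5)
Your arguments for \eqref{res61}, \eqref{res63}, \eqref{res64} and \eqref{res65} are correct. For \eqref{res61} you pair \eqref{f12byf2} with \eqref{f4byf1}, so the $q^{3n+2}$ component cancels exactly. This is forced, because $f_1f_4/f_2=\psi(-q)$ is supported on triangular numbers, none of which is $\equiv2\pmod 3$. The paper instead pairs \eqref{f22byf1} with \eqref{f4byf1} and gets a coefficient $3$. For \eqref{res64}--\eqref{res65} you use \eqref{res3} with $\ell=2$. This gives the stronger $\overline{p}_6(6n+5)\equiv0\pmod 8$, which is the case $\ell=2$ of \eqref{t27}, and it is more direct than the paper's extraction of $q^{3n+2}$ from \eqref{p66} and \eqref{p67}.

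The gap is \eqref{res62}, and it comes from an arithmetic slip rather than a missing idea. After inserting \eqref{1byf12} with $q\to q^3$, the odd part of $2f_6f_8^2/(f_3^2f_4)$ is $4q^3f_8^2f_{12}^2f_{48}^2/(f_4f_6^4f_{24})$. Dividing by $q$ and replacing $q^2$ by $q$ sends $f_8^2/f_4$ to $f_4^2/f_2$, exactly as in your even part, not to $f_2^2/f_1$. Using $f_3^4\equiv f_6^2\pmod 2$, the correct result is
\[
\sum_{n\ge0}\overline{p}_6(4n+3)q^n\equiv4q\,\frac{f_4^2f_6^2f_{24}^2}{f_2f_3^4f_{12}}\equiv4q\,\frac{f_4^2f_{24}^2}{f_2f_{12}}\pmod 8 .
\]
The right side is $q$ times a series in $q^2$, so the coefficients of $q^{2n}$ vanish and $\overline{p}_6(8n+3)\equiv0\pmod 8$ follows at once; this is the paper's argument via \eqref{p65}.

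As written, your proposal leaves \eqref{res62} unproved, and neither fallback can repair the series you wrote down. Since $1/f_1=1+q+2q^2+\cdots$, the $q^2$-coefficient of $4q\,f_2^2f_{24}^2/(f_1f_{12})$ is $4$ modulo $8$. So the parity statement you want is false for that incorrect series: it would predict $\overline{p}_6(11)\equiv4\pmod 8$, whereas $\overline{p}_6(11)=\overline{p}(11)+2\overline{p}(5)=392\equiv0\pmod 8$. No further dissection such as \eqref{f33byf1} can close this; the fix is to redo the $q^2\to q$ substitution.
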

	\begin{proof}
		Substituting $\ell=3$ in the equation \eqref{res5}, we obtain
		\begin{align}\label{p63}
			\sum_{n=0}^{\infty}\overline{p}_{6}(2n+1)q^n\equiv2\frac{f_2^2f_6f_8^2}{f_1^4f_3^2f_4}\equiv2\frac{f_6f_8^2}{f_3^2f_4}\pmod{8}.
		\end{align}
		Invoking \eqref{1byf12} in the above equation \eqref{p63}, we get
		\begin{align}
			\sum_{n=0}^{\infty}\overline{p}_{6}(4n+1)q^n\equiv2
			\frac{f_4^2f_{12}^5}
			{f_2f_3^4f_{24}^2}\equiv2\frac{f_4^2f_{12}}{f_2f_6^2}\pmod{8}\label{p64}
		\end{align}
		and
		\begin{align}
			\sum_{n=0}^{\infty}\overline{p}_{6}(4n+3)q^n\equiv4q\frac{f_4^2f_{6}^2f_{24}^2}
			{f_2f_3^4f_{12}}\pmod{8}.\label{p65}
		\end{align}
		Congruences \eqref{res62} and \eqref{res63} follow immediately from \eqref{p65} and \eqref{p64}, respectively. Since the right-hand side of \eqref{p64} is a power series in $q^2$, replacing $q^2$ by $q$ gives
		\begin{align}
			\sum_{n=0}^{\infty}\overline{p}_{6}(8n+1)q^n\equiv2\frac{f_2^2f_6}{f_1f_3^2}\pmod{8}.\label{p66}
		\end{align}
		By extracting the coefficients of $q^{3n+2}$ from both sides of \eqref{p66}, we obtain congruence \eqref{res64}. Now by extracting the terms involving $q^{2n+1}$ from both sides of the equation \eqref{p65}, we get
		\begin{align}
			\sum_{n=0}^{\infty}\overline{p}_{6}(8n+7)q^n\equiv4\frac{f_2^2f_{12}^2}
			{f_1f_{6}}\equiv4\frac{f_9^2f_{12}^2}{f_3f_{18}}
			+4q\,\frac{f_{12}^2f_{18}^2}{f_6f_9}
			\pmod{8}.\label{p67}
		\end{align}
		Extracting the terms involving $q^{3n+2}$ from both sides of the above equation, we arrive at the congruence \eqref{res65}.
		Substituting $\ell=2$ in \eqref{res1}, we get
		\begin{align}\label{p61}
			\sum_{n=0}^{\infty}\overline{p}_{6}(3n)q^n\equiv\frac{f_2^2f_3^6f_4}{f_1^8f_6^3}\equiv\frac{f_4}{f_1}\frac{f_2^2}{f_1}\frac{f_3^4}{f_6^3}\pmod{3}.
		\end{align}
		Substituting \eqref{f4byf1} and \eqref{f22byf1} in the equation \eqref{p61}, we get
		\begin{align}\label{p62}
			\sum_{n=0}^{\infty}\overline{p}_{6}(3n)q^n\equiv
			\frac{f_9^2f_{12}f_{18}^3}{f_6^2f_{36}^2}+q\left(
			\frac{f_3f_{12}f_{18}^6}{f_6^3f_9f_{36}^2}+\frac{f_9^5f_{36}}{f_3f_{18}^3}\right)+3q^2\frac{f_9^2f_{36}}{f_6}+2q^3\frac{f_3f_{18}^3f_{36}}{f_6^2f_9}.
			\pmod{3}.
		\end{align}
		By extracting the coefficients of $q^{3n+2}$ from both sides of \eqref{p62}, we obtain congruence \eqref{res61}. This completes the proof.
	\end{proof}
	\subsection{Congruences for $\overline{p}_{8}(n)$}
	In this section, we establish a few congruences for the partition function $\overline{p}_{8}(n)$.
	\begin{theorem}\label{61}For all $n\geq0$, $\alpha\geq1$
		\begin{align}
			\overline{p}_{8}(6n+5)\equiv0\pmod{3}.\label{res81}\\
			\overline{p}_{8}(16n+10)\equiv0\pmod{16},\label{res82}\\
			\overline{p}_{8}(2^{\alpha}(16n+10))\equiv0\pmod{16},\label{res83}\\
			\overline{p}_{8}(32n+28)\equiv0\pmod{16},\label{res84}\\
			\overline{p}_{8}(16n+14)\equiv0\pmod{32},\label{res85}\\
			\overline{p}_{8}(2^\alpha(8n+7))\equiv0\pmod{32},\label{res86}\\
			\overline{p}_{8}(8n+7)\equiv0\pmod{64}.\label{res87}
		\end{align}
	\end{theorem}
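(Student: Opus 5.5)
The plan is to begin, as in the treatment of $\overline{p}_4(n)$, with Theorem~\ref{t1} at $\ell=4$. This gives
\[
\sum_{n\ge0}\overline{p}_8(2n)q^n=\frac{f_4^3}{f_1^4f_8},\qquad
\sum_{n\ge0}\overline{p}_8(2n+1)q^n=2\,\frac{f_2^2f_8^3}{f_1^4f_4^3}.
\]
For \eqref{res81} I would reduce the odd part modulo $3$ using \eqref{bt}: $f_1^3\equiv f_3$, $f_4^3\equiv f_{12}$ and $f_8^3\equiv f_{24}$. This yields
\[
\sum_{n\ge0}\overline{p}_8(2n+1)q^n\equiv 2\,\frac{f_{24}}{f_3f_{12}}\cdot\frac{f_2^2}{f_1}\pmod 3.
\]
The prefactor is a series in $q^3$, and by \eqref{f22byf1} the factor $f_2^2/f_1$ has no terms $q^{3n+2}$. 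Extracting $q^{3n+2}$ therefore gives $\overline{p}_8(6n+5)\equiv0\pmod3$.

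For the $2$-power congruences I would insert \eqref{1byf14} into both parts to get the exact $4$-dissection
\begin{align*}
\sum_{n\ge0}\overline{p}_8(4n)q^n&=\frac{f_2^{17}}{f_1^{14}f_4^5}, &
\sum_{n\ge0}\overline{p}_8(4n+1)q^n&=2\,\frac{f_2^{11}}{f_1^{12}f_4},\\
\sum_{n\ge0}\overline{p}_8(4n+2)q^n&=4\,\frac{f_2^{5}f_4^3}{f_1^{10}}, &
\sum_{n\ge0}\overline{p}_8(4n+3)q^n&=8\,\frac{f_4^{7}}{f_1^{8}f_2}.
\end{align*}
In the last series I would write $1/f_1^8=(1/f_1^4)^2$ and use \eqref{1byf14} again. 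The cross term carries a factor $8q$, so the odd part of $f_4^7/(f_1^8f_2)$ is $8$ times a series with integer coefficients. This gives $\overline{p}_8(8n+7)\equiv0\pmod{64}$, which is \eqref{res87} and implies \eqref{res85} only in the weaker form $\pmod{64}\Rightarrow\pmod{32}$ for $16n+15$. I will check that \eqref{res85} actually concerns $16n+14$, which comes from the $4n+2$ component instead.

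For that component I would expand $1/f_1^{10}=1/f_1^{8}\cdot 1/f_1^{2}$ exactly using \eqref{1byf14} and \eqref{1byf12}, keeping all terms up to the factor $4$ already present. The odd part then acquires a factor $2$ or $4$ from each cross term. After reducing by \eqref{bt} ($f_1^{2^k}\equiv f_2^{2^{k-1}}\pmod{2^k}$), the goal is to express $\sum\overline{p}_8(8n+2)q^n$ and $\sum\overline{p}_8(8n+6)q^n$ modulo $32$ as a constant multiple of a single eta-quotient. One further even/odd extraction should then give \eqref{res82}, \eqref{res85} and \eqref{res84}, the last by starting from the $4n$ component and descending to $8n+4$, $16n+12$, $32n+28$ just as was done for $\overline{p}_4$.

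The main obstacle is the pair of infinite families \eqref{res83} and \eqref{res86}. For these I plan to prove an internal congruence modulo $16$ (respectively $32$). Concretely, the aim is to show that $\sum\overline{p}_8(2^{j}(8n+4))q^n$, or the appropriate even-index subseries, is congruent to the same eta-quotient as $\sum\overline{p}_8(8n+2)q^n$. Then $\overline{p}_8(2N)\equiv\overline{p}_8(N)$ on the relevant progressions, and induction on $\alpha$ transfers \eqref{res82} and \eqref{res87}. What I would try first is to reduce the $4n$ component $f_2^{17}/(f_1^{14}f_4^5)$ modulo $32$ to a form $f_2^a f_4^b/f_1^{2}$ times a unit. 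I would then apply \eqref{1byf12} and compare coefficients with the reduced $4n+2$ component. If the exponents do not match exactly, I would absorb the discrepancy using $f_2^{16}\equiv f_4^8\pmod{16}$.
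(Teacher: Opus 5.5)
Your treatment of \eqref{res81}, the exact $4$-dissection and \eqref{res87} follows the paper's proof. So does your outline for \eqref{res82}, \eqref{res84}, \eqref{res85}: expand $1/f_1^{10}=(1/f_1^4)^2\cdot 1/f_1^2$, reduce with \eqref{bt}, and descend $4n\to 8n+4\to 16n+12$ via \eqref{f12}.

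\textbf{The gap: the infinite families \eqref{res83} and \eqref{res86}.} The mechanism you propose for these cannot work.

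First, the target identity is false. Every $\overline{p}_8(8n+2)$ is divisible by $4$, because \eqref{p82} carries the factor $4$. But the constant terms of your series $\sum_n\overline{p}_8(2^j(8n+4))q^n$ are $\overline{p}_8(4)=14$, $\overline{p}_8(8)=102$ and $\overline{p}_8(16)=2266$, all $\equiv 2\pmod 4$. So no such series is congruent to the eta-quotient for $\sum_n\overline{p}_8(8n+2)q^n$, even modulo $4$.

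Second, the doubling congruence $\overline{p}_8(2N)\equiv\overline{p}_8(N)$ fails on the classes where a generating-function proof would have to establish it:
\begin{itemize}
\item $\overline{p}_8(8)\equiv 6$ but $\overline{p}_8(4)\equiv 14\pmod{16}$;
\item $\overline{p}_8(24)=28060\equiv 28$ but $\overline{p}_8(12)=532\equiv 20\pmod{32}$.
\end{itemize}
If you instead restrict it to the exact progressions $2^{\alpha-1}(16n+10)$ or $2^{\alpha-1}(8n+7)$, then, given the base case, it is just a restatement of the claim. The induction then has nothing driving it.

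Third, your fallback normal form $f_2^af_4^b/f_1^2$ (times a constant) does not exist modulo $32$. It would force $f_2^{17}/(f_1^{12}f_4^5)$ to be a series in $q^2$, whereas this series equals $1+12q+\cdots$, which is not a series in $q^2$ even modulo $8$.

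The paper does not close this gap either. Its proof never returns to \eqref{res83} or \eqref{res86}. In fact the case $\alpha=2$ of \eqref{res86}, namely $\overline{p}_8(32n+28)\equiv0\pmod{32}$, reappears as an open conjecture in the closing section.

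Fixed small $\alpha$ can be handled by pushing the dissections further:
\begin{itemize}
\item \eqref{p8131} is a series in $q^2$, which gives \eqref{res83} for $\alpha=1$.
\item Redo the passage from \eqref{p81} to \eqref{p813} modulo $32$. Write $1/f_1^{14}=(1/f_1^4)^3\cdot 1/f_1^2$ and use \eqref{1byf14}, \eqref{1byf12}, then \eqref{f12} and \eqref{f14}. This gives
\[
\sum_{n\ge0}\overline{p}_8(16n+12)q^n\equiv 4\frac{f_4f_8^2}{f_2}+16f_2f_8^2\pmod{32},
\]
which settles the case $\alpha=2$ of \eqref{res86}.
\end{itemize}
For all $\alpha$, however, you need two things. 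One is a description of $\sum_n\overline{p}_8(2^kn)q^n$ modulo $16$ (respectively $32$) that is closed under extracting even parts. The other is a proof that the target class is a series in $q^2$ at every stage. The property to propagate is ``is a series in $q^2$'', not equality of generating functions. As written, \eqref{res83} and \eqref{res86} remain unproved.
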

	\begin{proof}
		Substituting $\ell=4$ in \eqref{res5} and owing to \eqref{bt}, we have
		\begin{align}
			\sum_{n=0}^{\infty}\overline{p}_{8}(2n+1)q^n\equiv2\,\frac{f_2^2f_8^3}
			{f_1^4f_4^3}\equiv\frac{f_2^2f_{24}}
			{f_1f_3f_{12}}\pmod{3}.
		\end{align}
		Invoking \eqref{f22byf1} in the above equation and extracting the terms containing $q^{3n+2}$ from both sides of the resulting equation, we arrive at the congruence \eqref{res81}.
		Now Substituting $\ell=4$ in \eqref{res4} and \eqref{res5} and using \eqref{1byf14}, we get 
		\begin{align}
			\sum_{n=0}^{\infty}\overline{p}_{8}(4n)q^n=\frac{f_2^{17}}{f_1^{14}f_4^5},\label{p81}\\
			\sum_{n=0}^{\infty}\overline{p}_{8}(4n+1)q^n=2\,\frac{f_2^{11}}{f_1^{12}f_4},\label{p4321}\\
			\sum_{n=0}^{\infty}\overline{p}_{8}(4n+2)q^n=4\,\frac{f_2^5f_4^3}{f_1^{10}},\label{p82}\\
			\sum_{n=0}^{\infty}\overline{p}_{8}(4n+3)q^n=8\,\frac{f_4^7}{f_1^8f_2}.\label{p4322}
		\end{align}
		Thanks to \eqref{bt}, equation \eqref{p81} takes the form
		\begin{align}
			\sum_{n=0}^{\infty}\overline{p}_{8}(4n)q^n\equiv\frac{f_1^2f_2^{9}}{f_4^5}\pmod{16}.\label{p811}
		\end{align}
		Invoking \eqref{f12} in the \eqref{p811} and extracting the terms involving $q^{2n+1}$ from both sides of the resulting equation, we get 
		\begin{align}
			\sum_{n=0}^{\infty}\overline{p}_{8}(8n+4)q^n\equiv-2\frac{f_1^{10}f_8^{2}}{f_2^5f_4}\equiv-2\frac{f_1^{2}f_8^{2}}{f_2f_4}\pmod{16}.\label{p812}
		\end{align}
		Invoking \eqref{f12} and extracting the terms involving $q^{2n}$  and $q^{2n+1}$ from both sides of the resulting equation, we get
		\begin{align}
			\sum_{n=0}^{\infty}\overline{p}_{8}(16n+4)q^n\equiv-2\frac{f_4^7}{f_2^3f_8^2}\pmod{16}.\label{p8131}
		\end{align} 
		and
		\begin{align}
			\sum_{n=0}^{\infty}\overline{p}_{8}(16n+12)q^n\equiv4\frac{f_4f_8^{2}}{f_2}\pmod{16}.\label{p813}
		\end{align}    
		Extracting the terms involving $q^{2n}$ from both sides of the above equation, we get
		\begin{align}
			\sum_{n=0}^{\infty}\overline{p}_{8}(32n+12)q^n\equiv4\frac{f_2f_4^{2}}{f_1}\equiv 4 \psi(q)f_2^3\pmod{16}.\label{p814}
		\end{align}
		Invoking \eqref{1byf14} in \eqref{p4322} and extracting the terms involving from both sides of the resulting equation, we get
		\begin{align}
			\sum_{n=0}^{\infty}\overline{p}_{8}(8n+3)q^n=8\,\frac{f_2^{35}}{f_1^{29}f_4^8}+
			128q\,\frac{f_2^{11}f_4^8}{f_1^{21}}.\label{p4323}
		\end{align} and
		\begin{align}
			\sum_{n=0}^{\infty}\overline{p}_{8}(8n+7)q^n=
			64\,\frac{f_2^{23}}{f_1^{25}}.\label{p4324}
		\end{align}
		Thanks to \eqref{bt}, equation \eqref{p4323} reduces to 
		\begin{align}
			\sum_{n=0}^{\infty}\overline{p}_{8}(8n+3)q^n\equiv8f_2^3 \psi(q)\pmod{32}.\label{p43231}
		\end{align}
		Invoking \eqref{1byf14} in \eqref{p4321} and extracting the terms involving from both sides of the resulting equation, we get
		\begin{align}
			\sum_{n=0}^{\infty}\overline{p}_{8}(8n+1)q^n=2\,\frac{f_2^{41}}{f_1^{31}f_4^{12}}+96q\,\frac{f_2^{17}f_4^4}{f_1^{23}}\label{p4325}
		\end{align}
		and 
		\begin{align}
			\sum_{n=0}^{\infty}\overline{p}_{8}(8n+5)q^n=
			24\,\frac{f_2^{29}}{f_1^{27}f_4^4}+ 128q\,\frac{f_2^{5}f_4^{12}}{f_1^{19}}.\label{p4326}
		\end{align}
		Thanks to \eqref{bt}, equation \eqref{p4325} and  \eqref{p4326} respectively reduces to
		\begin{align}
			\sum_{n=0}^{\infty}\overline{p}_{8}(8n+1)q^n\equiv2f_1f_2\pmod{16}\label{p43251}
		\end{align}
		and
		\begin{align}
			\sum_{n=0}^{\infty}\overline{p}_{8}(8n+5)q^n\equiv24\psi(q)f_4^3\pmod{16}\label{p43252}.
		\end{align}
		Invoking \eqref{1byf12} and \eqref{1byf14} in the equation \eqref{p82}, and extracting the terms 
		involving from both sides of the resulting equation, we get
		\begin{align}
			\sum_{n=0}^{\infty}\overline{p}_{8}(8n+2)q^n=4\frac{f_2^{31}}{f_1^{28}f_4^3 f_{8}^2} 
			+ 64q\,\frac{f_2^{21}f_{8}^2}{f_1^{24}f_4} 
			+ 64q\,\frac{f_2^7 f_4^{13}}{f_1^{20}f_{8}^2} 
			\label{p8201}
		\end{align}
		and 
		\begin{align}
			\sum_{n=0}^{\infty}\overline{p}_{8}(8n+6)q^n= 8\,\frac{f_2^{33}f_{8}^2}{f_1^{28}f_4^9} 
			+32\,\frac{f_2^{19}f_4^5}{f_1^{24}f_{8}^2} 
			+ 128q\,\frac{f_2^9 f_4^7 f_{8}^2}{f_1^{20}}\label{p821}.
		\end{align}
		Invoking \eqref{bt} in \eqref{p82}, we get
		\begin{align}
			\sum_{n=0}^{\infty}\overline{p}_{8}(8n+2)q^n\equiv4{f_2f_4} 
			\pmod{16}\label{p822}
		\end{align}
		Thanks to \eqref{bt}, the equation \eqref{p8201} reduces to
		\begin{align}
			\sum_{n=0}^{\infty}\overline{p}_{8}(8n+2)q^n\equiv4\frac{f_1^4f_2^{15}}{f_4^3 f_{8}^2} \pmod{32}.
			\label{p823}
		\end{align}
		Invoking \eqref{f14} in the above equation and extracting the terms involving $q^{2n+1}$ on both sides of the resulting equation, we get
		\begin{align}
			\sum_{n=0}^{\infty}\overline{p}_{8}(16n+10)q^n\equiv16\frac{f_1^{17}f_4^{2}}{f_2^5}\equiv16\psi(q)f_4^3 \pmod{32}.
			\label{p824}
		\end{align}
		Thanks to \eqref{bt}, equation \eqref{p821} takes the form
		\begin{align}
			\sum_{n=0}^{\infty}\overline{p}_{8}(8n+6)q^n\equiv 8\,\frac{f_4^{5}}{f_2}\pmod{32}.\label{p826}
		\end{align}
		Extracting the terms involving $q^{2n}$ on both sides of the above equation, we get
		\begin{align}
			\sum_{n=0}^{\infty}\overline{p}_{8}(16n+6)q^n\equiv 8\,\frac{f_2^{5}}{f_1}\equiv8\psi(q)f_2^3\pmod{32}.\label{p825}
		\end{align}
		Since the right-hand side of \eqref{p826} is a power series in $q^2$, extracting the terms involving $q^{2n+1}$ from both sides yields the congruence \eqref{res85}, which completes the proof.
	\end{proof}
	\begin{theorem}\label{thf1f2}
		For all $n\geq0$,
		\begin{align}
			\overline{p}_{8}(72n+33)&\equiv0\pmod{16},\label{p8new1}\\
			\overline{p}_{8}(72n+57)&\equiv0\pmod{16}.\label{p8new2}
		\end{align}
	\end{theorem}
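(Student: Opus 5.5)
The plan is to start from the congruence \eqref{p43251},
\[
\sum_{n=0}^{\infty}\overline{p}_{8}(8n+1)q^n\equiv 2f_1f_2\pmod{16},
\]
which was obtained in the proof of Theorem~\ref{61}. Write $72n+33=8(9n+4)+1$ and $72n+57=8(9n+7)+1$. It then suffices to show that the coefficients of $q^{9n+4}$ and $q^{9n+7}$ in $f_1f_2$ are divisible by $8$. In fact I expect them to vanish identically, so the factor $2$ on the right-hand side is not even needed.

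First, insert the $3$-dissection \eqref{f1f2} of $f_1f_2$:
\[
f_1f_2=\frac{f_6f_9^4}{f_3f_{18}^2}-qf_9f_{18}-2q^2\frac{f_3f_{18}^4}{f_6f_9^2}.
\]
The first and third terms are $q^0$ and $q^2$ times power series in $q^3$. Hence the only terms of exponent $\equiv1\pmod 3$ come from $-qf_9f_{18}$. Extracting the terms in $q^{3n+1}$ and replacing $q^3$ by $q$ gives
\[
\sum_{n=0}^{\infty}\overline{p}_{8}(24n+9)q^n\equiv -2f_3f_6\pmod{16}.
\]

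Second, observe that $f_3f_6$ is a power series in $q^3$. Its coefficients of $q^{3n+1}$ and $q^{3n+2}$ are therefore zero, which gives
\[
\overline{p}_{8}(24(3n+1)+9)=\overline{p}_{8}(72n+33)\equiv0\pmod{16}
\]
and
\[
\overline{p}_{8}(24(3n+2)+9)=\overline{p}_{8}(72n+57)\equiv0\pmod{16}.
\]

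The only point requiring care is the validity of \eqref{p43251} itself. It rests on reducing \eqref{p4325} modulo $16$ via \eqref{bt}. The second term, $96q\,f_2^{17}f_4^4/f_1^{23}$, is $\equiv0\pmod{32}$ and so disappears. For the first term, I would recheck that $f_2^{41}/(f_1^{31}f_4^{12})\equiv f_1f_2\pmod 8$, using the relations $f_1^8\equiv f_2^4$ and $f_2^8\equiv f_4^4\pmod 8$ coming from \eqref{bt}. Since this congruence is stated earlier in the paper, the argument above is otherwise purely a matter of reading off residues modulo $3$.
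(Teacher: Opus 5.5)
Your proposal is correct and follows the paper's proof: substitute the $3$-dissection \eqref{f1f2} into \eqref{p43251}, extract the $q^{3n+1}$ terms to get $\sum_{n\ge0}\overline{p}_8(24n+9)q^n\equiv -2f_3f_6\pmod{16}$, and use that $f_3f_6$ is a power series in $q^3$. Your optional recheck of \eqref{p43251} also holds up: $96\equiv0\pmod{32}$, and modulo $8$ one has $f_4^{12}\equiv f_2^{24}$ and $f_1^{32}\equiv f_2^{16}$, so $f_2^{41}/(f_1^{31}f_4^{12})\equiv f_1f_2\pmod 8$.
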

	\begin{proof}
		Invoking \eqref{f1f2} in \eqref{p43251} and extracting the term involving $q^{3n+1}$ from both sides of the resulting equation,
		we get
		\begin{align}
			\sum_{n=0}^{\infty}\overline{p}_{8}(24n+9)q^n&\equiv-2f_3f_6\pmod{16}.\label{p8b}
		\end{align}
		Since $f_3f_6=(f_1f_2)\big|_{q\to q^3}$, the right-hand side of \eqref{p8b} is
		a power series in $q^3$. Extracting the terms involving $q^{3n+1}$ and
		$q^{3n+2}$ from both sides of \eqref{p8b} yields the congruences
		\eqref{p8new1} and \eqref{p8new2}, respecctively.
	\end{proof}
	
	\subsection{Congruences for $\overline{p}_{9}(n)$}
	In this section, we establish a few congruences for the partition function $\overline{p}_{9}(n)$.
	\begin{theorem}For all $n\geq0$, $\alpha\geq1$
		\begin{align}\overline{p}_{9}(36n+33)\equiv0 \pmod{3},\label{mres1}\\
			\overline{p}_{9}(3^{2\alpha}(3
			n+j))\equiv0 \pmod{3},\,\,\text{for}\,\,j=1,2,\label{mres2}\\
			\overline{p}_{9}(3
			n)\equiv\overline{p}_{9}(3^{2\alpha}(3n)) \pmod{3}.\label{mres3}
		\end{align}
	\end{theorem}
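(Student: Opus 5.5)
The approach is to reduce the generating function of $\overline{p}_9(n)$ modulo $3$ to an expression built from $f_1f_2$ and powers of $f_3$, and then 3-dissect repeatedly until the dissection becomes self-similar. From \eqref{pt} with $\ell=9$,
\[
\sum_{n\ge0}\overline{p}_9(n)q^n=\frac{f_2f_{18}}{f_1^2f_9^2}.
\]
By \eqref{bt} we have $f_1^3\equiv f_3\pmod 3$, so $1/f_1^2\equiv f_1/f_3$. Hence
\[
\sum_{n\ge0}\overline{p}_9(n)q^n\equiv f_1f_2\cdot\frac{f_{18}}{f_3f_9^2}\pmod 3,
\]
and the second factor is already a series in $q^3$.

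Next I substitute the 3-dissection \eqref{f1f2} of $f_1f_2$, extract the three progressions, and replace $q^3$ by $q$. This should give
\begin{align*}
\sum_{n\ge0}\overline{p}_9(3n)q^n&\equiv\frac{f_2f_3^2}{f_1^2f_6},\\
\sum_{n\ge0}\overline{p}_9(3n+1)q^n&\equiv-\frac{f_6^2}{f_1f_3},\\
\sum_{n\ge0}\overline{p}_9(3n+2)q^n&\equiv-2\frac{f_6^5}{f_2f_3^4}\pmod 3.
\end{align*}
For the two nonzero classes in \eqref{mres2} I rewrite each of the last two series, using \eqref{bt} again, as a series in $q^3$ times a single factor $f_2^2/f_1$ or $f_2/f_1^2$. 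I then apply \eqref{f22byf1} or \eqref{f2byf12} to split off the $q^{3n}$ part.

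For \eqref{mres3} the plan is to dissect $\sum\overline{p}_9(3n)q^n\equiv (f_2/f_1^2)\,f_3^2/f_6$ with \eqref{f2byf12}. The $q^{3n}$-component is
\[
\frac{f_6^3f_9^6}{f_3^6f_{18}^3}\ \longrightarrow\ \frac{f_2^3f_3^6}{f_1^6f_6^3}\equiv\frac{f_3^4}{f_6^2}\pmod 3,
\]
which is a series in $q^3$. I then carry out one more step, reducing $f_3^4/f_6^2$ and re-dissecting, and iterate until the generating function for $\overline{p}_9(3^{2}\cdot 3n)$ coincides modulo $3$ with that for $\overline{p}_9(3n)$. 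Induction on $\alpha$ then gives \eqref{mres3}. The vanishing of the side progressions at each stage (the components that are series in $q^3$) is exactly what produces \eqref{mres2} at level $3^{2\alpha}$: the classes $3n+1$ and $3n+2$ of the rescaled function vanish because the rescaled generating function is a series in $q^3$.

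For \eqref{mres1}, note that $36n+33=3(12n+11)$. So I work from the $3n+2$ generating function, rewritten modulo $3$ as $-2f_2^2f_6^4/f_3^4\equiv -2f_2^2\,f_6/f_3$ times a series in $q^6$. I 2-dissect this using the identity $f_6/f_3=\psi$-type dissections, or \eqref{1byf1f3} after rewriting, and then 3-dissect the odd part to isolate the class $12n+11$. I expect this to be the main obstacle: choosing the right order of 2- and 3-dissections and the right mod-$3$ rewriting so that the class $11 \pmod{12}$ visibly has zero coefficient. My first attempt is to 2-dissect first (index odd), then apply \eqref{f22byf1} to the resulting $f_2^2/f_1$-type factor, and extract $q^{3n+2}$.
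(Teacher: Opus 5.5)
The genuine gap is \eqref{mres1}. Your handling of \eqref{mres2} and \eqref{mres3} is essentially the paper's and is fine, with two small corrections.

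\emph{The other two parts.} Both you and the paper reach $\sum_{n\ge0}\overline{p}_9(9n)q^n\equiv f_3^4/f_6^2\pmod 3$. The paper applies \eqref{f1f2} to $f_1f_2f_3/f_6$; you apply \eqref{f2byf12} to $(f_2/f_1^2)(f_3^2/f_6)$, which is the same series modulo $3$. First, no further dissection or iteration is needed. Since $f_3^4/f_6^2$ is already a series in $q^3$, you get $\sum_{n\ge0}\overline{p}_9(27n)q^n\equiv f_1^4/f_2^2\equiv f_1f_2f_3/f_6\equiv f_2f_3^2/(f_1^2f_6)\pmod 3$, and the induction closes at once. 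Then \eqref{mres2} follows from the vanishing of the $q^{3n+1}$ and $q^{3n+2}$ parts of $f_3^4/f_6^2$ together with \eqref{mres3}. Second, your paragraph about dissecting the $3n+1$ and $3n+2$ series plays no role there, since $3^{2\alpha}(3n+j)$ is a multiple of $9$.

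\emph{Why your plan for \eqref{mres1} fails.} Because $36n+33=3(12n+11)$ is divisible by $3$, $\overline{p}_9(36n+33)$ is the coefficient of $q^{12n+11}$ in $\sum\overline{p}_9(3n)q^n$. It never occurs in $\sum\overline{p}_9(3n+2)q^n$, whose coefficient of $q^{12n+11}$ is $\overline{p}_9(36n+35)$. So you start from the wrong generating function. The plan also fails on its own terms: $f_6^4/f_3^4$ is not $f_6/f_3$ times a series in $q^6$, because the cofactor $f_6^3/f_3^3\equiv f_{18}/f_9\pmod 3$ has a nonzero $q^9$ term.

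\emph{How to fix it.} Stay inside the $3n$ series. The paper takes its $q^{3n+2}$-component,
$\sum\overline{p}_9(9n+6)q^n\equiv f_1^2f_6^4/(f_2^2f_3^2)\equiv f_2f_6^3/(f_1f_3)\pmod 3$.
This is exactly what the $4q^2$ term of your \eqref{f2byf12} dissection gives. The paper then applies the mod-$3$ $2$-dissection \eqref{f12byf33} and obtains $\sum\overline{p}_9(18n+15)q^n\equiv f_4f_6f_{12}/f_2$, a series in $q^2$. Its odd coefficients give $\overline{p}_9(36n+33)\equiv0$.

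Your instinct to $2$-dissect first also works once applied to the correct series. We have $f_1f_2f_3/f_6\equiv f_1^4f_2/f_6$, and by \eqref{f14} its odd part is $-4qf_2^3f_8^4/(f_4^2f_6)\equiv 2qf_8^4/f_4^2\pmod 3$. Hence $\sum\overline{p}_9(6n+3)q^n\equiv 2f_4^4/f_2^2$, which is a series in $q^2$. This gives the stronger congruence $\overline{p}_9(12n+9)\equiv0\pmod 3$, which contains \eqref{mres1} because $36n+33=12(3n+2)+9$.
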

	\begin{proof}
		Substituting $t=3$ in \eqref{res1} and thanks to \eqref{bt}, we get
		\begin{align}
			\overline{p}_{9}(3n)\equiv\frac{f_{1}f_2f_3}{f_{6}}\pmod{3}\label{res91}
		\end{align}
		Now invoking \eqref{f1f2} in the above equation, we get
		\begin{align}
			\overline{p}_{9}(9n)\equiv\frac{f_{3}^4}{f_{6}^2}\pmod{3},\label{res911}\\
			\overline{p}_{9}(9n+3)\equiv2\frac{f_1f_{3}f_6}{f_2}\pmod{3}\label{res912}\\
			\overline{p}_{9}(9n+6)\equiv\frac{f_1^2f_6^4}{f_2^2f_3^2}\pmod{3}.\label{res913}
		\end{align}
		Congruence \eqref{mres2} follows from \eqref{res911}. Equation \eqref{res911} implies that 
		\begin{align}
			\overline{p}_{9}(27n)\equiv\frac{f_1f_2f_3}{f_{6}}\pmod{3},\label{res914}
		\end{align}
		Congruence \eqref{mres3} follows from \eqref{res911} and the above equation. Invoking \eqref{f12byf33} in \eqref{res913} and extracting the terms involving $q^{2n+1}$ from both sides of the resulting equation, we get
		\begin{align}
			\overline{p}_{9}(18n+15)\equiv\frac{f_4f_6f_{12}}{f_2}\pmod{3}.\label{res915}
		\end{align}
		Congruence \eqref{mres1} follows from above equation. This completes the proof.   
	\end{proof}
	
	\subsection{Congruences from products of two lacunary theta-functions}
	The reductions leading to Theorems~\ref{thpdiss1} and \ref{thpdiss2} each
	obtain a single lacunary factor. Following the approach of \cite{Chen2023},
	several reductions in Sections~3 and 4 instead yield a product of two
	lacunary theta-functions, and the same vanishing follows, now from the
	representability of an integer by a binary quadratic form rather than from
	a single quadratic residue.
	
	In each of \eqref{p43i6}, \eqref{p49}, \eqref{p43252}, and \eqref{p824} the
	reduced form is a constant multiple of $\psi(q)\,\psi(q^4)$ or
	$\psi(q)\,f_4^3$. By the Jacobi identity used before Theorem~\ref{thpdiss2},
	$\psi(q)$ is supported on the triangular numbers $\tfrac{k(k+1)}{2}$, while
	both $\psi(q^4)$ and $f_4^3$ are supported on $\{2k(k+1):k\ge0\}$. Hence the
	coefficient of $q^N$ in either product is nonzero only if $N=a+4b$ with $a$,
	$b$ triangular, in which case $8a+1$ and $8b+1$ are odd squares and
	\begin{equation}
		8N+5=(8a+1)+4(8b+1)=x^2+4y^2,\qquad x,\,y\ \text{odd}.\label{tform1}
	\end{equation}
	Since $8N+5\equiv5\pmod8$, every representation $8N+5=x^2+4y^2$ has $x$ and
	$y$ odd, and such a representation exists if and only if $8N+5$ is a sum of
	two squares. By Fermat's two-square theorem this fails whenever some prime
	$p\equiv3\pmod4$ divides $8N+5$ to an odd power. If $p\mid(8b+5)$ but
	$p^2\nmid(8b+5)$, then writing $N=p^2n+b$ gives
	$8N+5=p\bigl(8pn+(8b+5)/p\bigr)$ with the second factor prime to $p$, so
	$v_p(8N+5)=1$ for every $n\ge0$. Thus the coefficient of $q^N$ in the product
	vanishes on the whole progression $N\equiv b\pmod{p^2}$.
	
	\begin{theorem}\label{thprodA}
		Let $p\equiv3\pmod4$ be prime and let $b$, $0\le b<p^2$, satisfy
		$p\mid(8b+5)$ and $p^2\nmid(8b+5)$. Then for every $n\ge0$,
		\begin{align}
			\overline{p}_{4}\bigl(64(p^2n+b)+40\bigr)&\equiv0\pmod{32},\\
			\overline{p}_{4}\bigl(32(p^2n+b)+20\bigr)&\equiv0\pmod{128},\\
			\overline{p}_{8}\bigl(8(p^2n+b)+5\bigr)&\equiv0\pmod{16},\\
			\overline{p}_{8}\bigl(16(p^2n+b)+10\bigr)&\equiv0\pmod{32}.
		\end{align}
	\end{theorem}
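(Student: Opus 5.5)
The plan is to read off all four congruences from the reduced forms \eqref{p43i6}, \eqref{p49}, \eqref{p43252} and \eqref{p824}, using the vanishing argument given just before the statement. Each of these has the shape
\[
\sum_{N\ge0} \overline{p}_{\ell}(AN+B)\,q^N \equiv c\,\Theta(q) \pmod{M},
\]
where $\Theta(q)$ is $\psi(q)\psi(q^4)$ or $\psi(q)f_4^3$. The four cases are:
\begin{itemize}
\item $(A,B,c,M)=(64,40,8,32)$ with $\Theta=\psi(q)\psi(q^4)$, from \eqref{p43i6};
\item $(32,20,32,128)$ with $\Theta=\psi(q)\psi(q^4)$, from \eqref{p49};
\item $(8,5,24,16)$ with $\Theta=\psi(q)f_4^3$, from \eqref{p43252};
\item $(16,10,16,32)$ with $\Theta=\psi(q)f_4^3$, from \eqref{p824}.
\end{itemize}
In every case $\Theta$ has integer coefficients. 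So it suffices to show that the coefficient of $q^N$ in $\Theta(q)$ is exactly $0$ for every $N=p^2n+b$. Then $\overline{p}_\ell(AN+B)\equiv c\cdot 0\equiv0\pmod M$, which gives the four displayed congruences with $N=p^2n+b$.

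First I record the supports explicitly. We have $\psi(q)=\sum_{k\ge0}q^{k(k+1)/2}$, so $\psi(q^4)=\sum_{k\ge0}q^{2k(k+1)}$. By Jacobi's identity, $f_4^3=\sum_{k\ge0}(-1)^k(2k+1)q^{2k(k+1)}$. Hence the coefficient of $q^N$ in $\Theta$ is a finite sum, with integer weights, over pairs $(a,b')$ of triangular numbers satisfying $N=a+4b'$. If there is no such pair, the sum is empty and the coefficient is zero.

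Next, suppose such a pair exists. Then $8a+1=x^2$ and $8b'+1=y^2$ with $x,y$ odd, so
\[
8N+5=(8a+1)+4(8b'+1)=x^2+(2y)^2 .
\]
Thus $8N+5$ is a sum of two squares. Only this direction is needed.

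It remains to show that $8N+5$ is not a sum of two squares when $N=p^2n+b$. Write
\[
8N+5=8p^2n+(8b+5).
\]
The hypotheses give $p\mid 8b+5$ and $p^2\nmid 8b+5$, while $p^2\mid 8p^2n$. Hence $v_p(8N+5)=1$ for every $n\ge0$. Since $p\equiv3\pmod4$ divides $8N+5$ to an odd power, Fermat's two-square theorem (the classical characterization of sums of two squares) shows $8N+5$ is not a sum of two squares. Therefore the coefficient of $q^N$ in $\Theta$ vanishes, and all four congruences follow.

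The main obstacle is modest bookkeeping rather than any new idea. One must confirm that each cited reduction really has an integer-coefficient lacunary product on the right-hand side, and that the constant $c$ and modulus $M$ match the statement. In particular, in \eqref{p43252} the coefficient $24$ modulo $16$ is harmless, because the product's coefficient is exactly zero and not merely divisible by something. One must also make sure that the identification of the supports of $\psi(q^4)$ and $f_4^3$ as $\{2k(k+1)\}$ is done correctly, since both the triangular-number substitution and the final valuation computation depend on it.
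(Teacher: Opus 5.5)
Your proposal is correct and is essentially the paper's own argument. You read off the reduced forms \eqref{p43i6}, \eqref{p49}, \eqref{p43252}, \eqref{p824}, note that the coefficient of $q^N$ in $\psi(q)\psi(q^4)$ or $\psi(q)f_4^3$ is nonzero only if $8N+5=x^2+(2y)^2$, and exclude this on $N\equiv b\pmod{p^2}$ via $v_p(8N+5)=1$ and Fermat's two-square theorem; I also rechecked the four cited reductions with their constants and moduli, and they hold, while your remark that only the direction ``representable $\Rightarrow$ sum of two squares'' is needed slightly streamlines the paper's biconditional.
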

	\begin{proof}
		From the discussion above, the coefficient of $q^N$ in each of \eqref{p43i6},
		\eqref{p49}, \eqref{p43252}, and \eqref{p824} vanishes on
		$N\equiv b\pmod{p^2}$. Reading this class back through the outer progressions
		$64N+40$, $32N+20$, $8N+5$, and $16N+10$ of those four equations gives the
		four congruences.
	\end{proof}
	For example, $p=3$ gives $b\in\{2,8\}\pmod9$, so for every $n\ge0$,
	\begin{align}
		\overline{p}_{4}(576n+168)&\equiv0\pmod{32}, &
		\overline{p}_{4}(576n+552)&\equiv0\pmod{32},\\
		\overline{p}_{4}(288n+84)&\equiv0\pmod{128}, &
		\overline{p}_{4}(288n+276)&\equiv0\pmod{128},\\
		\overline{p}_{8}(72n+21)&\equiv0\pmod{16}, &
		\overline{p}_{8}(72n+69)&\equiv0\pmod{16},\\
		\overline{p}_{8}(144n+42)&\equiv0\pmod{32}, &
		\overline{p}_{8}(144n+138)&\equiv0\pmod{32}.
	\end{align}
	
	In \eqref{p814}, \eqref{p43231}, and \eqref{p825} the reduced form is a
	constant multiple of $\psi(q)\,f_2^3$. Since $f_2^3$ is supported on
	$\{k(k+1):k\ge0\}$, twice the triangular numbers, the coefficient of $q^N$ is
	nonzero only if $N=a+2b$ with $a$, $b$ triangular, in which case
	\begin{equation}
		8N+3=(8a+1)+2(8b+1)=x^2+2y^2,\qquad x,\,y\ \text{odd}.\label{tform2}
	\end{equation}
	Since $8N+3\equiv3\pmod8$ and squares modulo $8$ lie in $\{0,1,4\}$, every
	such representation has $x$ and $y$ odd, and $8N+3$ is represented by
	$x^2+2y^2$ if and only if every prime $p\equiv5,7\pmod8$ in its factorization
	occurs to an even power. The valuation argument of Theorem~\ref{thprodA}
	follows identically: if $p\mid(8b+3)$ but $p^2\nmid(8b+3)$, then
	$v_p(8N+3)=1$ on $N\equiv b\pmod{p^2}$, so the coefficient vanishes there.
	
	\begin{theorem}\label{thprodB}
		Let $p\equiv5$ or $7\pmod8$ be prime and let $b$, $0\le b<p^2$, satisfy
		$p\mid(8b+3)$ and $p^2\nmid(8b+3)$. Then for every $n\ge0$,
		\begin{align}
			\overline{p}_{8}\bigl(32(p^2n+b)+12\bigr)&\equiv0\pmod{16},\\
			\overline{p}_{8}\bigl(8(p^2n+b)+3\bigr)&\equiv0\pmod{32},\\
			\overline{p}_{8}\bigl(16(p^2n+b)+6\bigr)&\equiv0\pmod{32}.
		\end{align}
	\end{theorem}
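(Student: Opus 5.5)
The plan follows the proof of Theorem~\ref{thprodA}, with the form $x^2+4y^2$ replaced by $x^2+2y^2$. The three reduced identities are
\[
\sum_{n\ge0}\overline{p}_8(32n+12)q^n\equiv4\psi(q)f_2^3\pmod{16},
\]
\[
\sum_{n\ge0}\overline{p}_8(8n+3)q^n\equiv8\psi(q)f_2^3\pmod{32},
\]
\[
\sum_{n\ge0}\overline{p}_8(16n+6)q^n\equiv8\psi(q)f_2^3\pmod{32}.
\]
These are \eqref{p814}, \eqref{p43231} and \eqref{p825}. In each, the right side is a constant times $\psi(q)f_2^3$. So it suffices to show that the coefficient of $q^N$ in $\psi(q)f_2^3$ is $0$ for every $N\equiv b\pmod{p^2}$. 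Then substituting $N=p^2n+b$ into $32N+12$, $8N+3$ and $16N+6$ gives the three congruences.

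First I fix the supports. By the identity $\psi(q)=f_2^2/f_1=\sum_{a\ge0}q^{a(a+1)/2}$, $\psi(q)$ is supported on the triangular numbers. By Jacobi's identity $f_1^3=\sum_{m\ge0}(-1)^m(2m+1)q^{m(m+1)/2}$, $f_2^3$ is supported on $\{m(m+1)\}$, that is, on twice the triangular numbers. Hence the coefficient of $q^N$ in the product is a sum over representations $N=A+2B$ with $A,B$ triangular. If no such representation exists, the coefficient is $0$ as an integer, so no cancellation modulo the modulus needs to be checked. Writing $8A+1=x^2$ and $8B+1=y^2$ with $x,y$ odd gives $8N+3=x^2+2y^2$, as in \eqref{tform2}. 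So a nonzero coefficient forces $8N+3$ to be represented by $x^2+2y^2$ over the integers. Only this necessary direction is needed.

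The arithmetic step is the main point to get right. I will use the classical fact that if a prime $p\equiv5,7\pmod8$ divides $x^2+2y^2$, then $p\mid x$ and $p\mid y$. The reason is that $-2$ is a nonresidue modulo such $p$: if $p\nmid y$, then $(x/y)^2\equiv-2\pmod p$, which is impossible. Dividing out $p^2$ and repeating shows that $v_p(x^2+2y^2)$ is even. Now let $N=p^2n+b$. Then $8N+3=8p^2n+(8b+3)$, and since $p\mid(8b+3)$ we get $v_p(8N+3)=1$, because $p^2\mid 8p^2n$ while $p^2\nmid(8b+3)$. This odd valuation rules out any representation, so the coefficient vanishes on the whole class $N\equiv b\pmod{p^2}$. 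The only care needed is this valuation bookkeeping. The representability criterion is not needed beyond this even-valuation necessary condition.
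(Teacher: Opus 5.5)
Your proof is correct and matches the paper's argument. You use the same three reduced identities \eqref{p814}, \eqref{p43231}, \eqref{p825}, the same support argument giving $8N+3=x^2+2y^2$ with $x,y$ odd, and the same valuation step $v_p(8N+3)=1$ on $N\equiv b\pmod{p^2}$. The one difference is that you derive the obstruction directly from $\left(\frac{-2}{p}\right)=-1$ for $p\equiv5,7\pmod8$ instead of quoting the full representation criterion for $x^2+2y^2$; that necessary direction is all the argument actually needs.
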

	\begin{proof}
		From the discussion above, the coefficient of $q^N$ in each of \eqref{p814},
		\eqref{p43231}, and \eqref{p825} vanishes on $N\equiv b\pmod{p^2}$. Reading
		this class back through the outer progressions $32N+12$, $8N+3$, and $16N+6$
		gives the three congruences.
	\end{proof}
	For example, $p=5$ gives $b\in\{4,14,19,24\}\pmod{25}$, so for every
	$n\ge0$,
	\begin{align}
		\overline{p}_{8}(800n+140)&\equiv0\pmod{16}, &
		\overline{p}_{8}(800n+460)&\equiv0\pmod{16},\\
		\overline{p}_{8}(800n+620)&\equiv0\pmod{16}, &
		\overline{p}_{8}(800n+780)&\equiv0\pmod{16},\\
		\overline{p}_{8}(200n+35)&\equiv0\pmod{32}, &
		\overline{p}_{8}(200n+115)&\equiv0\pmod{32},\\
		\overline{p}_{8}(200n+155)&\equiv0\pmod{32}, &
		\overline{p}_{8}(200n+195)&\equiv0\pmod{32},\\
		\overline{p}_{8}(400n+70)&\equiv0\pmod{32}, &
		\overline{p}_{8}(400n+230)&\equiv0\pmod{32},\\
		\overline{p}_{8}(400n+310)&\equiv0\pmod{32}, &
		\overline{p}_{8}(400n+390)&\equiv0\pmod{32}.
	\end{align}
	
	\section{Closing Remarks}
	We close with several observations and conjectures suggested by our
	computations.
	
	\begin{conjecture}
		For all $\ell\equiv0,1\pmod4$ and $n\ge0$,
		\[
		\overline{p}_{\ell}(4n+3)\equiv0\pmod8.
		\]
		The case $\ell\equiv0\pmod8$ follows from Corollary~\ref{c1}. The remaining
		cases are supported by numerical evidence and left open. 
	\end{conjecture}
	
	\noindent Our computations further indicate the congruences
	\begin{align}
		\overline{p}_{4}(64n+56)&\equiv0\pmod{64},\\
		\overline{p}_{4}(32n+20)&\equiv0\pmod{32},\\
		\overline{p}_{4}(36n+21)&\equiv0\pmod{9},
	\end{align}
	whose proofs we leave to the interested reader.
	
	\noindent In Theorems~\ref{th6} and \ref{61} we observe that a few of the congruences
	modulo $8$ appear to hold to a higher modulus. We state this as a conjecture.
	\begin{conjecture}
		For all $n\ge0$,
		\begin{align}
			\overline{p}_{8}(32n+28)&\equiv0\pmod{32},\\
			\overline{p}_{6}(24n+17)&\equiv0\pmod{64},\\
			\overline{p}_{6}(24n+23)&\equiv0\pmod{64}.
		\end{align}
	\end{conjecture}
	
	\noindent Finally, we record some conjectured arithmetic properties of
	$\overline{p}_{9}(n)$.
	\begin{conjecture}
		For all $n\ge0$ and $\alpha\ge1$,
		\begin{align}
			\overline{p}_{9}\bigl(2^{\alpha}(8n+7)\bigr)&\equiv0\pmod{16},\\
			\overline{p}_{9}(36n+33)&\equiv0\pmod{128}.
		\end{align}
	\end{conjecture}
	
	\noindent \textbf{Funding}:
	The author(s) received no financial support for the research, authorship, and/or publication of this article.
	
\end{document}